% !TEX encoding = UTF-8 Unicode

\documentclass[a4paper,11pt]{amsart}

\usepackage[utf8]{inputenc}

\usepackage{amssymb}

%\makeatletter
%\@namedef{subjclassname@2010}{%
%  \textup{2010} Mathematics Subject Classification}
%\makeatother

\usepackage[T1]{fontenc}

\allowdisplaybreaks[2]

\providecommand{\norm}[1]{\lVert#1\rVert}
\providecommand{\tnorma}[1]{\lvert\lvert\lvert#1\rvert\rvert\rvert}

\DeclareMathOperator{\trace}{trace}

\newtheorem{theorem}{Theorem}[section]
\newtheorem{proposition}[theorem]{Proposition}
\newtheorem{lemma}[theorem]{Lemma}
\newtheorem{corollary}[theorem]{Corollary}
\theoremstyle{definition}
\newtheorem{remark}[theorem]{Remark}

\newtheorem{example}[theorem]{Example}

\numberwithin{equation}{section}

\begin{document}

\title[Orthogonally additive polynomials]
{Orthogonally additive polynomials on convolution algebras associated with a compact group}

\author{J. Alaminos}
\address{Departamento de An\' alisis
Matem\' atico\\ Fa\-cul\-tad de Ciencias\\ Universidad de Granada\\
18071 Granada, Spain} 
\email{alaminos@ugr.es}
\author{M. L. C. Godoy}
\address{Departamento de An\' alisis
Matem\' atico\\ Fa\-cul\-tad de Ciencias\\ Universidad de Granada\\
18071 Granada, Spain} 
\email{marisa23@correo.ugr.es}
\author{J. Extremera}
\address{Departamento de An\' alisis
Matem\' atico\\ Fa\-cul\-tad de Ciencias\\ Universidad de Granada\\
18071 Granada, Spain}
\email{jlizana@ugr.es}
\author{A.\,R. Villena}
\address{Departamento de An\' alisis
Matem\' atico\\ Fa\-cul\-tad de
 Ciencias\\ Universidad de Granada\\
18071 Granada, Spain} 
\email{avillena@ugr.es}

\date{}

\begin{abstract}
Let $G$ be a compact group,
let $X$ be a Banach space, and
let $P\colon L^1(G)\to X$ be an orthogonally additive, continuous $n$-homogeneous polynomial.
Then we show that there exists a unique continuous linear map $\Phi\colon L^1(G)\to X$ such that
$P(f)=\Phi \bigl(f\ast\stackrel{n}{\cdots}\ast f \bigr)$ for each $f\in L^1(G)$.
We also seek analogues of this result about $L^1(G)$ for various other convolution algebras,
including $L^p(G)$, for $1< p\le\infty$, and $C(G)$.
\end{abstract}

\subjclass[2010]{43A20, 43A77, 47H60}
\keywords{Compact group, convolution algebra, group algebra, orthogonally additive polynomial}

\thanks{The first, the third and the fourth named authors were supported by MINECO grant MTM2015--65020--P and Junta de Andaluc\'{\i}a grant FQM--185.}

\maketitle

\section{Introduction}

Throughout all  algebras and linear spaces are complex.
Of course, linearity is understood to mean complex linearity.
Moreover, we fix $n\in\mathbb{N}$ with $n\ge 2$.

Let $X$ and $Y$ be linear spaces. 
A map $P\colon X\to Y$ is said to be an \emph{$n$-homogeneous polynomial} if
there exists an $n$-linear map $\varphi\colon X^n\to Y$ such that
$P(x)=\varphi \left( x,\dotsc,x \right)$ $(x\in X)$. Here and subsequently, $X^n$
stands for the $n$-fold Cartesian product of $X$. Such a map is unique
if it is required to be symmetric. This is a consequence of the
so-called polarization formula which defines $\varphi$ by
\[
\varphi \left( x_1,\ldots,x_n \right)=
\frac{1}{n!\,2^n}\sum_{\epsilon_1,\ldots,\epsilon_n=\pm 1}
\epsilon_{1}\cdots\epsilon_{n} P \left(\epsilon_1x_1+\cdots+\epsilon_{n}x_{n} \right)
\]
for all $x_1,\dotsc,x_n\in X$.
Further, in the case where $X$ and $Y$ are normed spaces, the
polynomial $P$ is continuous if and only if the symmetric $n$-linear
map $\varphi$ associated with $P$ is continuous.
Let $A$ be an algebra. 
Then the map $P_n\colon A\to A$ defined by
\begin{equation*}
P_n(a)=a^n \quad (a\in A)
\end{equation*}
is a prototypical example of $n$-homogeneous polynomial. 
The symmetric $n$-linear map associated with $P_n$ is the map 
$S_n\colon A^n\to A$ defined by
\begin{equation*}
S_n \left( a_1,\dotsc,a_n \right) =
\frac{1}{n!}\sum_{\sigma\in\mathfrak{S}_n}a_{\sigma(1)}\dotsb a_{\sigma(n)} 
\quad \left(a_1,\dots,a_n\in A \right),
\end{equation*}
where $\mathfrak{S}_n$ stands for the symmetric group of order $n$.
From now on, 
we write $\mathcal{P}_n(A)$ for the linear span of the set $\left\{a^n : a\in A\right\}$.
Given a linear space $Y$ and a linear map $\Phi\colon\mathcal{P}_n(A)\to Y$,
the map $P\colon A\to Y$ defined by
\begin{equation}\label{standard0}
P(a)=\Phi\left( a^n \right)  \quad (a\in A)
\end{equation}
yields a particularly important example of $n$-homogeneous polynomial, and
one might wish to know an algebraic characterization of those  
$n$-homogeneous polynomials $P\colon A\to Y$ which can be expressed in the form \eqref{standard0}.
Further, in the case where $A$ is a Banach algebra, $Y$ is a Banach space, and 
the $n$-homogeneous polynomial $P\colon A\to Y$ is continuous, one should
particularly like that the map $\Phi$ of \eqref{standard0} be continuous.
A property that has proven valuable for this purpose is the so-called orthogonal additivity.
Let $A$ be an algebra and let $Y$ be a linear space.
A map $P\colon A\to Y$ is said to be \emph{orthogonally additive}
if
\[
a,b\in A, \ ab=ba=0  \ \Rightarrow \ P(a+b)=P(a)+P(b).
\]
The polynomial defined by \eqref{standard0} is a prototypical example of orthogonally
additive $n$-homogeneous polynomial, and 
the obvious questions that one can address are the following.
\begin{enumerate}
\item[Q1]
Let $A$ be a specified algebra.
Is it true that every orthogonally additive $n$-homogeneous polynomial $P$ from $A$
into each linear space $Y$ can be expressed in the standard form \eqref{standard0} for some
linear map $\Phi\colon\mathcal{P}_n(A)\to Y$?
\item[Q2]
Let $A$ be a specified Banach algebra.
Is it true that every  orthogonally additive continuous $n$-homogeneous polynomial $P$ from $A$
into each Banach space $Y$ can be expressed in the standard form \eqref{standard0} for some
continuous linear map $\Phi\colon\mathcal{P}_n(A)\to Y$?
\item[Q3]
Let $A$ be a specified Banach algebra.
Is there any norm $\tnorma{\cdot}$ on $\mathcal{P}_n(A)$ with
the property that the  orthogonally additive continuous $n$-homogeneous polynomials
from $A$ into each Banach space $Y$ are exactly the polynomials of the form \eqref{standard0}
for some
$\tnorma{\cdot}$-continuous linear map $\Phi\colon\mathcal{P}_n(A)\to Y$?
\end{enumerate}
It seems to be convenient to remark that the demand of Q3 results precisely in the following two conditions:
\begin{itemize}
\item
for each Banach space $Y$ and each $\tnorma{\cdot}$-continuous linear map 
$\Phi\colon\mathcal{P}_n(A)\to Y$, the prototypical polinomial $P\colon A\to Y$ defined by 
\eqref{standard0} is continuous, and
\item
every  orthogonally additive continuous $n$-homogeneous polynomial $P$ from $A$
into each Banach space $Y$ can be expressed in the standard form~\eqref{standard0} for some
$\tnorma{\cdot}$-continuous linear map $\Phi\colon\mathcal{P}_n(A)\to Y$.
\end{itemize}
It is shown in \cite{P} that the answer to Question Q2 is positive 
in the case where $A$ is a $C^*$-algebra 
(see~\cite{P2,P3} for the case where $A$ is a $C^*$-algebra and $P$ is
a holomorphic map).
The references \cite{A1,A2,V,W,WW} discuss Question Q2 for a variety of Banach function algebras,
including the Fourier algebra $A(G)$ and the Fig\`a-Talamanca-Herz algebra $A_p(G)$
of a locally compact group $G$. 
%the polinomials of the form \eqref{standard0}
%are exactly the canonical examples of orthogonally additive continuous $n$-homogeneous polynomials from
%$A$ into each Banach space $Y$

This paper focuses on the questions Q1, Q2, and Q3 mentioned above for a variety of convolution algebras 
associated with a compact group $G$, such as $L^p(G)$, for $1\le p\le\infty$, and $C(G)$.
In contrast to the previous references, that are concerned with $C^*$-algebras and commutative Banach algebras,
the algebras in this work are neither $C^*$ nor commutative. 

Throughout, we are concerned with a compact group $G$ whose Haar measure is normalized. 
We write $\int_G f(t)\, dt$ for the integral of $f\in L^1(G)$ with respect to the Haar measure.
For $f\in L^1(G)$, we denote by $f^{*n}$ the $n$-fold convolution product $f\ast\dotsb\ast f$.
We denote by $\widehat{G}$ the set of equivalence classes of irreducible unitary representations
of $G$.
Let $\pi$ be an irreducible unitary representation of $G$ on a Hilbert space $H_\pi$.
We set $d_\pi=\dim(H_\pi)(<\infty)$, and the character $\chi_\pi$ of $\pi$ is the
continuous function on $G$ defined by
\[
\chi_\pi(t)=\trace\bigl(\pi(t)\bigr) \quad (t\in G).
\]
We write $\mathcal{T}_\pi(G)$ for the linear subspace of $C(G)$ generated by the set of continuous functions on
$G$ of the form $t\mapsto\langle\pi(t)u\vert v\rangle$ as $u$ and $v$ range over $H_\pi$.
It should be pointed out that $\chi_\pi$ and $\mathcal{T}_\pi(G)$  depend only on the unitary equivalence class of $\pi$.
We write $\mathcal{T}(G)$ for the linear span of the functions in $\mathcal{T}_\pi(G)$ as $[\pi]$
ranges over $\widehat{G}$. 
Then $\mathcal{T}(G)$ is a two-sided ideal of $L^1(G)$ whose elements are called trigonometric polynomials on $G$.
The Fourier transform of a function $f\in L^1(G)$ at $\pi$ is defined to be the operator
\[
\widehat{f}(\pi)=\int_Gf(t)\pi(t^{-1}) \,dt
\]
on $H_\pi$. Note that if $\pi'$ is equivalent to $\pi$, then the operators $\widehat{f}(\pi')$
and $\widehat{f}(\pi)$ are unitarily equivalent.

In Section~2 we show that the answer to Question Q1 is positive for the algebra $\mathcal{T}(G)$.
In Section~3 we show that the answer to Question Q2 is positive for the group algebra $L^1(G)$.
In Section~4 we give a negative answer to Question Q2 for any of the convolution algebras $L^p(\mathbb{T})$,
for $1<p\le\infty$, and $C(\mathbb{T})$, where $\mathbb{T}$ denotes the circle group.
In Section~5 we prove that, for each Banach algebra $A$, there exists a largest norm topology on the linear space
$\mathcal{P}_n(A)$ for which the answer to Question Q3 can be positive.
Finally,  in Section~6 we show that the answer to Question Q3 is positive for most of the significant convolution
algebras associated to $G$, such as $L^p(G)$, for $1<p<\infty$, and $C(G)$, when considering the norm introduced in Section~5.

We presume a basic knowledge of Banach algebra theory, harmonic analysis for compact groups,  and polynomials on Banach spaces.
For the relevant background material concerning these topics, see \cite{D}, \cite{HR},  and \cite{M}, respectively.

\section{Orthogonally additive polynomials on $\mathcal{T}(G)$}

Our starting point is furnished by applying \cite{P} to the full matrix algebra $\mathbb{M}_k$
of order $k$
(which supplies the most elementary example of $C^*$-algebra).

\begin{lemma}\label{l1}
Let $\mathcal{M}$ be an algebra isomorphic to $\mathbb{M}_k$ for some $k\in\mathbb{N}$,
let $X$ be a linear space, and 
let $P\colon\mathcal{M}\to X$ be an orthogonally additive $n$-homogeneous polynomial.
Then there exists a unique linear map $\Phi\colon\mathcal{M}\to X$ such that
$P(a)=\Phi\left( a^n \right) $ for each $a\in\mathcal{M}$.
Further, if $\varphi\colon\mathcal{M}^n\to X$ is the symmetric $n$-linear map associated with $P$ 
and $e$ is the identity of $\mathcal{M}$, then
$\Phi(a)=\varphi \left(a,e,\dotsc,e \right)$
for each $a\in \mathcal{M}$.
\end{lemma}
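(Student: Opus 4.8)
The plan is to reduce everything to the known $C^*$-algebra result of \cite{P} and then identify the linear map explicitly. First I would invoke \cite{P} (applied to the $C^*$-algebra $\mathbb{M}_k$, which is finite dimensional so that continuity is automatic): since an orthogonally additive $n$-homogeneous polynomial on a $C^*$-algebra is standard, there exists a linear map $\Phi_0\colon\mathcal{P}_n(\mathbb{M}_k)\to X$ with $P(a)=\Phi_0(a^n)$ for all $a$. The point to check is that $\mathcal{P}_n(\mathbb{M}_k)=\mathbb{M}_k$, i.e. the linear span of $n$-th powers is all of $\mathbb{M}_k$; this is easy, e.g. because every matrix unit $e_{ij}$ and every rank-one idempotent can be written as a linear combination of $n$-th powers (or simply note that $\mathcal{P}_n(\mathbb{M}_k)$ is a nonzero ideal, hence everything, since $\mathbb{M}_k$ is simple). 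Transporting along the algebra isomorphism $\mathcal{M}\cong\mathbb{M}_k$ then gives a linear $\Phi\colon\mathcal{M}\to X$ with $P(a)=\Phi(a^n)$, and $\mathcal{P}_n(\mathcal{M})=\mathcal{M}$.

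Next I would prove uniqueness and the formula $\Phi(a)=\varphi(a,e,\dots,e)$ simultaneously. Uniqueness is immediate once one knows $\mathcal{P}_n(\mathcal{M})=\mathcal{M}$: two linear maps agreeing on all $a^n$ agree on their span, which is all of $\mathcal{M}$. For the explicit formula, let $\varphi$ be the symmetric $n$-linear map associated with $P$, so $P(a)=\varphi(a,\dots,a)$ and $P(a)=\Phi(a^n)$ for every $a\in\mathcal{M}$. Define $\Psi\colon\mathcal{M}\to X$ by $\Psi(a)=\varphi(a,e,\dots,e)$; this is linear. I claim $\Psi=\Phi$. It suffices to check $\Psi(a^n)=\Phi(a^n)=P(a)$ for all $a$, again because powers span $\mathcal{M}$. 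Wait — that reduction needs $\Psi$ applied to $a^n$, not $\Psi(a)$; more honestly, since both $\Phi$ and $\Psi$ are linear and $\mathcal{P}_n(\mathcal{M})=\mathcal{M}$, it is enough to show $\Phi$ and $\Psi$ agree on each $a^n$, i.e. $\varphi(a^n,e,\dots,e)=P(a)$.

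So the crux is the identity $\varphi(a^n,e,\dots,e)=\varphi(a,\dots,a)$ for all $a\in\mathcal{M}$, where $\varphi$ is the symmetric $n$-linear map of an orthogonally additive polynomial. This I would establish by a standard spectral/diagonalization argument in $\mathbb{M}_k$: it is enough to verify it for $a$ ranging over a set that linearizes appropriately. Concretely, first check it for $a=p$ a projection (idempotent): mutually orthogonal rank-one projections $p_1,\dots,p_k$ summing to $e$ are pairwise annihilating, so orthogonal additivity plus $n$-homogeneity forces $\varphi$ to ``decouple'' along them; writing $a=\sum\lambda_i p_i$ for a diagonalizable $a$ and expanding both sides multilinearly, all cross terms $\varphi(p_i,\dots,p_j,e,\dots,e)$ with the $p$'s not all equal must vanish (because $p_ip_j=0$ and orthogonal additivity kills them), leaving $\varphi(a^n,e,\dots,e)=\sum_i\lambda_i^n\,\varphi(p_i,e,\dots,e)=\sum_i\lambda_i^n P(p_i)=P(a)$. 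Since the diagonalizable elements (even the ones with distinct eigenvalues) are dense and the identity is polynomial in $a$, and since $\varphi$ is continuous on the finite-dimensional $\mathcal{M}$, it extends to all $a\in\mathcal{M}$; transporting back to the abstract $\mathcal{M}$ via the isomorphism finishes the proof. The main obstacle is the bookkeeping in this decoupling step — verifying that every ``impure'' term $\varphi(p_{i_1},\dots,p_{i_n},e,\dots,e)$ with not all indices equal is zero — but this follows cleanly from orthogonal additivity applied repeatedly to the orthogonal idempotents, together with the polarization identity, so no genuinely hard estimate is involved.
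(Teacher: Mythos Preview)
Your existence argument matches the paper's: transport to $\mathbb{M}_k$, invoke \cite{P}, transport back. One technicality you gloss over is that \cite{P} requires a Banach space target, while here $X$ is merely a linear space; the paper handles this by endowing $X$ with a norm and passing to its completion (equivalently, one observes that the image of $P$ lies in a finite-dimensional subspace of $X$).

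Where you genuinely diverge is in deriving the formula $\Phi(a)=\varphi(a,e,\dotsc,e)$. Your route---diagonalize $a$, use orthogonal additivity on the spectral projections to kill all cross terms $\varphi(p_{i_1},\dotsc,p_{i_n})$ with mixed indices, then pass to general $a$ by density of diagonalizable matrices and continuity in finite dimensions---is correct, but it is considerably more work than necessary. The paper obtains the formula in one line: once $P(a)=\Phi(a^n)$ is known, both sides are $n$-homogeneous polynomials in $a$, so their associated symmetric $n$-linear maps coincide, i.e.\ $\varphi(a_1,\dotsc,a_n)=\Phi\bigl(S_n(a_1,\dotsc,a_n)\bigr)$; plugging in $a_1=a$, $a_2=\dotsb=a_n=e$ and noting that $S_n(a,e,\dotsc,e)=a$ gives $\varphi(a,e,\dotsc,e)=\Phi(a)$ immediately. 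Your approach has the virtue of being self-contained about \emph{why} the formula holds (you never appeal to uniqueness of the symmetric multilinear map), but the paper's polarization trick is the natural shortcut here and avoids all the bookkeeping with projections and density.
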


\begin{proof}
Let $\Psi\colon\mathcal{M}\to\mathbb{M}_k$ be an isomorphism.
Endow $X$ with a norm, and let $Y$ be its completion.
Since $\mathbb{M}_k$ is a $C^*$-algebra and 
the map $P\circ\Psi^{-1}\colon\mathbb{M}_k\to Y$ is a continuous orthogonally additive
$n$-homogeneous polynomial, \cite[Corollary~3.1]{P} then shows that there exists
a unique linear map $\Theta\colon\mathbb{M}_k\to Y$ such that
$P \left(\Psi^{-1}(M)\right)=\Theta \left(M^n \right)$ for each $M\in\mathbb{M}_k$. It is a simple matter to check
that the map $\Phi=\Theta\circ\Psi$ satisfies the identity $P(a)=\Phi \left(a^n \right)$ $(a\in A)$.
Now the polarization of this identity yields 
$\varphi \left( a_1,\ldots,a_n \right)=\Phi\bigl(S_n(a_1,\dotsc,a_n)\bigr)$
$\left( a_1,\dotsc,a_n\in \mathcal{M}\right)$,
whence
$\varphi \left( a,e,\dotsc,e \right)=\Phi(a)$ for each $a\in\mathcal{M}$.
\end{proof}

In what follows, we will require some elementary facts about the algebra $\mathcal{T}(G)$;
we gather together these facts here for reference.

\begin{lemma}\label{l2}
Let $G$ be a compact group.
Then the following results hold.
\begin{enumerate}%[(1)]
\item
For each irreducible unitary representation $\pi$,
$\mathcal{T}_{\pi}(G)$ is a minimal two-sided ideal of $L^1(G)$, 
$\mathcal{T}_{\pi}(G)$ is isomorphic to the full matrix algebra $\mathbb{M}_{d_\pi}$, and
$d_\pi\chi_\pi$ is the identity of $\mathcal{T}_\pi(G)$.
\item
For each $f\in\mathcal{T}(G)$, the set 
\[
\bigl\{[\pi]\in\widehat{G} : f\ast\chi_\pi\ne 0\bigr\}
\]
is finite and
\[
f=\sum_{[\pi]\in\widehat{G}}d_\pi f\ast\chi_\pi.
\]
%\item
%$\mathcal{T}(G)$ is dense in each of the Banach algebras $L^p(G)$, for $1\le p<\infty$, and $C(G)$.
\end{enumerate}
\end{lemma}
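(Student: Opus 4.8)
The plan is to derive both statements from the Schur orthogonality relations for matrix coefficients. Fix an irreducible unitary representation $\pi$ on $H_\pi$, choose an orthonormal basis $e_1,\dots,e_{d_\pi}$ of $H_\pi$, and put $\pi_{ij}(t)=\langle\pi(t)e_j\vert e_i\rangle$. Expanding a general coefficient function $t\mapsto\langle\pi(t)u\vert v\rangle$ in this basis shows that $\mathcal{T}_\pi(G)$ is precisely the linear span of the $d_\pi^2$ functions $\pi_{ij}$, and the orthogonality relations $\int_G\pi_{ij}(t)\overline{\pi_{kl}(t)}\,dt=d_\pi^{-1}\delta_{ik}\delta_{jl}$ show that these functions are linearly independent, so that $\dim\mathcal{T}_\pi(G)=d_\pi^2$.

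The central computation is the convolution identity $\pi_{ij}\ast\pi_{kl}=d_\pi^{-1}\delta_{jk}\pi_{il}$, which I would obtain by writing $\pi_{kl}(t^{-1}s)=\sum_m\pi_{ml}(s)\overline{\pi_{mk}(t)}$ (using unitarity of $\pi$) and then applying Schur orthogonality to the integral in $t$. Consequently the elements $u_{ij}:=d_\pi\pi_{ij}$ satisfy $u_{ij}\ast u_{kl}=\delta_{jk}u_{il}$, that is, they form a complete system of matrix units; since they also form a basis of $\mathcal{T}_\pi(G)$, it follows that $\mathcal{T}_\pi(G)$ is a subalgebra of $L^1(G)$ and that $u_{ij}\mapsto E_{ij}$ extends to an algebra isomorphism of $\mathcal{T}_\pi(G)$ onto $\mathbb{M}_{d_\pi}$. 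The identity $\sum_iE_{ii}$ of $\mathbb{M}_{d_\pi}$ then corresponds to $\sum_iu_{ii}=d_\pi\sum_i\pi_{ii}=d_\pi\chi_\pi$. To see that $\mathcal{T}_\pi(G)$ is a two-sided ideal of $L^1(G)$, I would compute, for arbitrary $f\in L^1(G)$, that $f\ast\pi_{ij}$ equals the function $s\mapsto\langle\pi(s)e_j\vert v\rangle$ with $v=\int_G\overline{f(t)}\pi(t)e_i\,dt\in H_\pi$, hence lies in $\mathcal{T}_\pi(G)$, and symmetrically on the other side. Minimality is then automatic: a two-sided ideal of $L^1(G)$ contained in $\mathcal{T}_\pi(G)$ is in particular a two-sided ideal of the simple algebra $\mathcal{T}_\pi(G)\cong\mathbb{M}_{d_\pi}$, hence is $\{0\}$ or all of $\mathcal{T}_\pi(G)$.

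For part (2), I would first record the vanishing of cross terms: if $[\pi]\ne[\pi']$, the same expansion together with the orthogonality of matrix coefficients of inequivalent representations yields $\mathcal{T}_{\pi'}(G)\ast\mathcal{T}_\pi(G)=\{0\}$, and in particular $g\ast\chi_\pi=0$ for every $g\in\mathcal{T}_{\pi'}(G)$. On the other hand, since $d_\pi\chi_\pi$ is the identity of $\mathcal{T}_\pi(G)$ by part (1), we have $d_\pi\,g\ast\chi_\pi=g$ for every $g\in\mathcal{T}_\pi(G)$. Writing a given $f\in\mathcal{T}(G)$ as a finite sum $f=\sum_{k=1}^mg_k$ with $g_k\in\mathcal{T}_{\pi_k}(G)$ and the classes $[\pi_k]$ pairwise distinct, these two observations give $f\ast\chi_\pi=0$ whenever $[\pi]\notin\{[\pi_1],\dots,[\pi_m]\}$, which proves finiteness, while $\sum_{[\pi]\in\widehat{G}}d_\pi\,f\ast\chi_\pi=\sum_{k=1}^md_{\pi_k}\,g_k\ast\chi_{\pi_k}=\sum_{k=1}^mg_k=f$.

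The only mildly delicate part is the bookkeeping with the indices in the convolution identity, and being careful that the asserted isomorphism $\mathcal{T}_\pi(G)\cong\mathbb{M}_{d_\pi}$ is meant with respect to convolution. I do not anticipate any genuine obstacle: everything here is a direct consequence of the Peter--Weyl theory and the Schur orthogonality relations, for which I would refer to \cite{HR}.
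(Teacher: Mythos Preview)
Your argument is correct. The paper's own proof consists solely of citations to \cite[Theorems~27.21 and 27.24(ii)]{HR} for part~(1) and \cite[Remark~27.8(a) and Theorem~27.24(ii)]{HR} for part~(2); your sketch is precisely the Peter--Weyl computation underlying those references, so there is no alternative approach to compare.
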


\begin{proof}
\begin{enumerate}%[(1)]
\item \cite[Theorems~27.21 and 27.24(ii)]{HR}.
\item \cite[Remark~27.8(a) and Theorem~27.24(ii)]{HR}.   \qedhere
\end{enumerate}
\end{proof}

\begin{theorem}\label{t1}
Let $G$ be a compact group, 
let $X$ be a linear space,  and
let $P\colon \mathcal{T}(G)\to X$ be an orthogonally additive $n$-homogeneous polynomial.
Then there exists a unique linear map $\Phi\colon\mathcal{T}(G)\to X$
such that
\begin{equation*}
P(f)=\Phi \left( f^{*n} \right)
\end{equation*}
for each $f\in\mathcal{T}(G)$.
Further,
if $\varphi\colon\mathcal{T}(G)^{n}\to X$ is the symmetric $n$-linear map associated with $P$, 
then 
\begin{equation*}
\Phi(f)=
\sum_{[\pi]\in\widehat{G}}\varphi \left( d_\pi f\ast\chi_\pi,d_\pi\chi_\pi\dotsc,d_\pi \chi_\pi \right)
\end{equation*}
(the term $f\ast\chi_\pi$ being $0$ for all but finitely many $[\pi]\in\widehat{G}$)
for each $f\in\mathcal{T}(G)$.
\end{theorem}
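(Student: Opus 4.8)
The plan is to reduce the problem on $\mathcal{T}(G)$ to the matrix‑algebra case handled in Lemma~\ref{l1}, using the ideal decomposition of Lemma~\ref{l2}. First I would fix, for each $[\pi]\in\widehat{G}$, the minimal ideal $\mathcal{M}_\pi:=\mathcal{T}_\pi(G)$, which is isomorphic to $\mathbb{M}_{d_\pi}$ with identity $e_\pi:=d_\pi\chi_\pi$. Restricting $P$ to $\mathcal{M}_\pi$ gives an orthogonally additive $n$-homogeneous polynomial $P_\pi\colon\mathcal{M}_\pi\to X$, so Lemma~\ref{l1} yields a unique linear map $\Phi_\pi\colon\mathcal{M}_\pi\to X$ with $P(g)=\Phi_\pi(g^{\ast n})$ for $g\in\mathcal{M}_\pi$, and moreover $\Phi_\pi(g)=\varphi(g,e_\pi,\dotsc,e_\pi)$.

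Next I would assemble these pieces. Given $f\in\mathcal{T}(G)$, Lemma~\ref{l2}(2) writes $f=\sum_{[\pi]} d_\pi f\ast\chi_\pi$ as a finite sum, with the summand $f_\pi:=d_\pi f\ast\chi_\pi$ lying in $\mathcal{M}_\pi$. The ideals $\mathcal{M}_\pi$ for distinct classes annihilate each other (they are distinct minimal two-sided ideals, so $\mathcal{M}_\pi\mathcal{M}_{\pi'}=0$ for $[\pi]\ne[\pi']$, which also follows from the orthogonality relations $\chi_\pi\ast\chi_{\pi'}=0$). Hence for any finite set $F\subseteq\widehat{G}$ the elements $\{f_\pi:[\pi]\in F\}$ are pairwise orthogonal (each product in either order is $0$), so orthogonal additivity of $P$ — applied inductively — gives $P(f)=\sum_{[\pi]}P(f_\pi)=\sum_{[\pi]}\Phi_\pi(f_\pi^{\ast n})$. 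On the other hand, since the ideals are mutually annihilating, $f^{\ast n}=\bigl(\sum_\pi f_\pi\bigr)^{\ast n}=\sum_\pi f_\pi^{\ast n}$ (all mixed terms vanish), and each $f_\pi^{\ast n}\in\mathcal{M}_\pi$. Therefore, defining $\Phi\colon\mathcal{T}(G)\to X$ by $\Phi(h)=\sum_{[\pi]}\Phi_\pi(d_\pi h\ast\chi_\pi)$ — a finite sum, and linear because each $\Phi_\pi$ is and because $h\mapsto d_\pi h\ast\chi_\pi$ is linear — we get $\Phi(f^{\ast n})=\sum_\pi\Phi_\pi(f_\pi^{\ast n})=P(f)$, as desired. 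The displayed formula for $\Phi$ follows since $\Phi_\pi(f_\pi)=\varphi(f_\pi,e_\pi,\dotsc,e_\pi)=\varphi(d_\pi f\ast\chi_\pi,d_\pi\chi_\pi,\dotsc,d_\pi\chi_\pi)$.

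For uniqueness, suppose $\Phi'\colon\mathcal{T}(G)\to X$ is linear with $P(f)=\Phi'(f^{\ast n})$ for all $f\in\mathcal{T}(G)$. Restricting to each $\mathcal{M}_\pi$ and invoking the uniqueness clause of Lemma~\ref{l1} forces $\Phi'|_{\mathcal{M}_\pi}=\Phi_\pi=\Phi|_{\mathcal{M}_\pi}$; since $\mathcal{T}(G)=\bigoplus_{[\pi]}\mathcal{M}_\pi$ as a linear space (by Lemma~\ref{l2}(2)), linearity then gives $\Phi'=\Phi$.

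The only genuinely delicate point is verifying that the cross terms vanish so that $f^{\ast n}=\sum_\pi f_\pi^{\ast n}$ and $P(f)=\sum_\pi P(f_\pi)$; this rests on $\mathcal{M}_\pi\mathcal{M}_{\pi'}=\{0\}$ for $[\pi]\ne[\pi']$, which I would justify either from minimality of the ideals in Lemma~\ref{l2}(1) or directly from $\chi_\pi\ast\chi_{\pi'}=\delta_{[\pi],[\pi']}\,d_\pi^{-1}\chi_\pi$. Everything else is bookkeeping with finite sums, since each $f\in\mathcal{T}(G)$ has only finitely many nonzero components $f_\pi$.
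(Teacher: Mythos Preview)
Your proposal is correct and follows essentially the same route as the paper: decompose $f\in\mathcal{T}(G)$ into its mutually orthogonal components $f_\pi=d_\pi f\ast\chi_\pi\in\mathcal{T}_\pi(G)$ via Lemma~\ref{l2}, use orthogonal additivity to split $P(f)=\sum_\pi P(f_\pi)$, and apply Lemma~\ref{l1} on each matrix block to obtain the formula $\Phi_\pi(g)=\varphi(g,d_\pi\chi_\pi,\dotsc,d_\pi\chi_\pi)$. The only differences are organizational: you invoke Lemma~\ref{l1} before splitting and you spell out the uniqueness argument via the direct-sum decomposition $\mathcal{T}(G)=\bigoplus_{[\pi]}\mathcal{T}_\pi(G)$, whereas the paper leaves uniqueness implicit in the derived formula.
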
 

\begin{proof}
We first show that
\begin{equation}\label{e1}
P(f)=\sum_{[\pi]\in\widehat{G}}P \left( d_\pi f\ast\chi_\pi \right)
\end{equation}
for each $f\in \mathcal{T}(G)$.
Set $f\in\mathcal{T}(G)$.
By Lemma~\ref{l2}, we have
\begin{equation}\label{e2}
f=\sum_{[\pi]\in\widehat{G}}d_\pi f\ast\chi_\pi.
\end{equation}
It should be pointed out that all save finitely many of the summands in \eqref{e2} are $0$.
Further, from \cite[Theorem~27.24(ii)-(iii)]{HR} we see that
\[
\left(f\ast\chi_\pi \right)\ast \left( f\ast\chi_{\pi'} \right)=
f\ast f\ast\chi_\pi\ast\chi_{\pi'}=0,
\]
whenever $[\pi],[\pi']\in\widehat{G}$ and  $[\pi]\ne[\pi']$.
The orthogonal additivity of $P$ and \eqref{e2} then yield \eqref{e1}.

Now set $[\pi]\in\widehat{G}$.
On account of Lemma~\ref{l2}(1),
$\mathcal{T}_{\pi}(G)$ is isomorphic to the full matrix algebra $\mathbb{M}_{d_\pi}$
and $d_\pi\chi_\pi$ is the identity of $\mathcal{T}_{\pi}(G)$.
Hence, by Lemma~\ref{l1}, we have
\begin{equation*}
P(f)=\varphi \left(f^{*n},d_\pi\chi_\pi,\dotsc,d_\pi\chi_\pi \right)
\end{equation*}
for each $f\in\mathcal{T}_{\pi}(G)$. 
In particular, if $f\in\mathcal{T}(G)$, then
$f\ast d_\pi\chi_\pi\in\mathcal{T}_{\pi}(G)$ and thus
\begin{equation}\label{e3}
\begin{split}
P \left(d_\pi f\ast\chi_\pi \right)
& =
\varphi\bigl((f\ast d_\pi\chi_\pi)^{*n},d_\pi\chi_\pi,\dotsc,d_\pi\chi_\pi\bigr)\\
& =
\varphi\bigl(f^{*n}\ast d_\pi\chi_\pi,d_\pi\chi_\pi,\dotsc,d_\pi\chi_\pi\bigr).
\end{split}
\end{equation}

From \eqref{e1} and \eqref{e3} we conclude that
\begin{equation*}
\begin{split}
P(f)
& =
\sum_{[\pi]\in\widehat{G}}P \left( d_\pi f\ast\chi_\pi \right)\\
& =
\sum_{[\pi]\in\widehat{G}}\varphi \left( d_\pi f^{*n}\ast\chi_\pi, d_\pi\chi_\pi,\dotsc,d_\pi\chi_\pi \right)
\end{split}
\end{equation*}
for each $f\in\mathcal{T}(G)$, which completes the proof.
\end{proof}

\section{Orthogonally additive polynomials on $L^1(G)$}\label{sect}

A key fact in what follows is that the Banach algebra $L^1(G)$ has a central bounded approximate identity
consisting of trigonometric polynomials.
Indeed, by \cite[Theorem~28.53]{HR},
there exists a bounded approximate identity $(h_\lambda)_{\lambda\in\Lambda}$ for $L^1(G)$ such that:
\begin{itemize}
\item
$\lVert h_\lambda\rVert_1=1$ for each $\lambda\in\Lambda$;
\item
$h_\lambda\ast f=f\ast h_\lambda$ for all $f\in L^1(G)$ and $\lambda\in\Lambda$;
\item
$\widehat{h_\lambda}(\pi)=\alpha_\lambda(\pi) I_\pi$ for some $\alpha_\lambda(\pi)\in\mathbb{R}^+_0$
for all $[\pi]\in\widehat{G}$ and $\lambda\in\Lambda$ (here $I_\pi$ denotes the identity operator on $H_\pi$);
\item
$\lim_{\lambda\in\Lambda} \alpha_\lambda(\pi)=1$ for each $[\pi]\in\widehat{G}$.
\end{itemize}
This approximate identity  will be used repeatedly hereafter.

\begin{theorem}\label{tm}
Let $G$ be a compact group,
let $X$ be a Banach space, and
let $P\colon L^1(G)\to X$ be a continuous $n$-homogeneous polynomial.
Then the following conditions are equivalent:
\begin{enumerate}%[(1)]
\item
the polynomial $P$ is orthogonally additive; 
\item
the polynomial $P$ is orthogonally additive on $\mathcal{T}(G)$, i.e.,
$P(f+g)=P(f)+P(g)$ whenever $f,g\in\mathcal{T}(G)$ are such that $f\ast g=g\ast f=0$;
\item
there exists a unique continuous linear map $\Phi\colon L^1(G)\to X$ such that
$P(f)=\Phi\left( f^{*n} \right) $ for each $f\in L^1(G)$.
\end{enumerate}
\end{theorem}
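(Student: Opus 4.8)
The plan is to establish the cycle $(3)\Rightarrow(1)\Rightarrow(2)\Rightarrow(3)$, the first two implications being nearly immediate and all the substance lying in the last. For $(3)\Rightarrow(1)$: if $\Phi$ is as in (3) and $f,g\in L^1(G)$ satisfy $f\ast g=g\ast f=0$, then, expanding $(f+g)^{*n}$ as a sum of convolution words of length $n$ in $f$ and $g$, every such word other than $f^{*n}$ and $g^{*n}$ contains two consecutive letters forming a factor $f\ast g$ or $g\ast f$, hence vanishes by associativity; thus $(f+g)^{*n}=f^{*n}+g^{*n}$, and the linearity of $\Phi$ gives $P(f+g)=P(f)+P(g)$. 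The implication $(1)\Rightarrow(2)$ is trivial, since $\mathcal{T}(G)\subseteq L^1(G)$.

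So assume (2) and let us prove (3). Then $P|_{\mathcal{T}(G)}$ is an orthogonally additive $n$-homogeneous polynomial on $\mathcal{T}(G)$, so Theorem~\ref{t1} provides a unique linear map $\Phi_0\colon\mathcal{T}(G)\to X$ with $P(f)=\Phi_0(f^{*n})$ for every $f\in\mathcal{T}(G)$. Polarizing this identity and using that a polynomial determines its associated symmetric $n$-linear map, we obtain $\varphi(g_1,\dotsc,g_n)=\Phi_0\bigl(S_n(g_1,\dotsc,g_n)\bigr)$ for all $g_1,\dotsc,g_n\in\mathcal{T}(G)$, where $\varphi$ denotes the symmetric $n$-linear map associated with $P$. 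The crux is to show that $\Phi_0$ is continuous for $\norm{\cdot}_1$. Granting this, $\Phi_0$ extends uniquely to a continuous linear map $\Phi\colon L^1(G)\to X$ since $\mathcal{T}(G)$ is dense in $L^1(G)$, and the identity $P(f)=\Phi(f^{*n})$ then propagates from $\mathcal{T}(G)$ to all of $L^1(G)$ because $P$, $\Phi$, and the map $f\mapsto f^{*n}$ are all continuous on $L^1(G)$; uniqueness of $\Phi$ follows by restricting any candidate to $\mathcal{T}(G)$, applying the uniqueness clause of Theorem~\ref{t1}, and invoking density once more.

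To prove the continuity of $\Phi_0$ I would use the central bounded approximate identity $(h_\lambda)_{\lambda\in\Lambda}\subseteq\mathcal{T}(G)$ recorded above, with $\norm{h_\lambda}_1=1$ for all $\lambda$. Fix $f\in\mathcal{T}(G)$. Since each $h_\lambda$ is central in $L^1(G)$, the symmetrized product $S_n(f,h_\lambda,\dotsc,h_\lambda)$ reduces to $f\ast h_\lambda^{*(n-1)}$, whence $\varphi(f,h_\lambda,\dotsc,h_\lambda)=\Phi_0\bigl(f\ast h_\lambda^{*(n-1)}\bigr)$. On the other hand, $f$ being a trigonometric polynomial, it lies in a finite-dimensional subspace of the form $V=\bigoplus_{[\pi]\in F}\mathcal{T}_\pi(G)$ for some finite $F\subseteq\widehat{G}$; by Lemma~\ref{l2}(1) each $\mathcal{T}_\pi(G)$ is a two-sided ideal of $L^1(G)$, hence so is $V$, and therefore $f\ast h_\lambda^{*(n-1)}\in V$ for every $\lambda$. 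As $\Phi_0$ restricted to the finite-dimensional space $V$ is automatically continuous, and $f\ast h_\lambda^{*(n-1)}\to f$ in $L^1(G)$ because $(h_\lambda)$ is an approximate identity, we conclude that $\Phi_0\bigl(f\ast h_\lambda^{*(n-1)}\bigr)\to\Phi_0(f)$. Combining this with the continuity of $\varphi$, and recalling that $\norm{h_\lambda}_1=1$ for every $\lambda$, we obtain
\[
\norm{\Phi_0(f)}=\lim_{\lambda}\norm{\varphi(f,h_\lambda,\dotsc,h_\lambda)}\le\norm{\varphi}\,\norm{f}_1,
\]
the required estimate ($\norm{\varphi}$ here being the norm of $\varphi$ as a continuous $n$-linear map).

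The only genuinely delicate step is this last one, the continuity of $\Phi_0$; everything else is routine density and continuity bookkeeping. What makes it go through is the conjunction of two special features of $L^1(G)$ for compact $G$: the existence of a bounded approximate identity consisting of \emph{central} trigonometric polynomials of norm one, which simultaneously lets us realize $\varphi(f,h_\lambda,\dotsc,h_\lambda)$ as $\Phi_0$ evaluated at $f\ast h_\lambda^{*(n-1)}$ and keeps the norm factors under control; and the fact that every trigonometric polynomial sits inside a finite-dimensional two-sided ideal, on which the a priori merely linear map $\Phi_0$ is automatically bounded.
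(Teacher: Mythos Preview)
Your proof is correct and follows the same architecture as the paper's: use Theorem~\ref{t1} to obtain $\Phi_0$ on $\mathcal{T}(G)$, establish $\lVert\Phi_0(f)\rVert\le\lVert\varphi\rVert\,\lVert f\rVert_1$ by showing that $\varphi(f,h_\lambda,\dotsc,h_\lambda)\to\Phi_0(f)$ and bounding the left side, then extend by density. The one place you diverge is in how you prove the convergence $\varphi(f,h_\lambda,\dotsc,h_\lambda)\to\Phi_0(f)$: the paper computes this explicitly via the Fourier formula $\widehat{h_\lambda}(\pi)=\alpha_\lambda(\pi)I_\pi$, obtaining $\varphi(f,h_\lambda,\dotsc,h_\lambda)=\sum_{[\pi]\in\mathcal{F}}\alpha_\lambda(\pi)^{n-1}\varphi(d_\pi f\ast\chi_\pi,d_\pi\chi_\pi,\dotsc,d_\pi\chi_\pi)$ and using $\alpha_\lambda(\pi)\to 1$, whereas you bypass this by observing that $\varphi(f,h_\lambda,\dotsc,h_\lambda)=\Phi_0(f\ast h_\lambda^{*(n-1)})$ stays inside a finite-dimensional ideal on which $\Phi_0$ is automatically continuous. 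Your variant is slightly cleaner in that it uses only the centrality and approximate-identity property of $(h_\lambda)$, not the precise scalar form of $\widehat{h_\lambda}(\pi)$; it is in fact the same device the paper deploys later in the proof of Theorem~\ref{tf}.
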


\begin{proof}
It is clear that $(1)\Rightarrow(2)$ and that $(3)\Rightarrow(1)$.
We will henceforth prove that $(2)\Rightarrow(3)$.

Let $\varphi\colon L^1(G)^{n}\to X$ be the symmetric $n$-linear map associated with $P$,
and let $\Phi_0\colon\mathcal{T}(G)\to X$ be the linear map defined by  
\begin{equation*}
\Phi_0(f)=
\sum_{[\pi]\in\widehat{G}}\varphi \left(d_\pi f\ast\chi_\pi,d_\pi\chi_\pi\dotsc,d_\pi \chi_\pi \right)
\end{equation*}
for each $f\in\mathcal{T}(G)$.
Since $P$ is orthogonally additive on $\mathcal{T}(G)$, Theorem~\ref{t1} yields
\begin{equation}\label{e12}
P(f)=\Phi_0 \left( f^{*n} \right) \quad \left(f\in\mathcal{T}(G) \right).
\end{equation}

We claim that $\Phi_0$ is continuous.
Let $(h_\lambda)_{\lambda\in\Lambda}$ be as introduced in the beginning of this section.
We now note that, for $\lambda\in\Lambda$ and $[\pi]\in\widehat{G}$,
\begin{equation*}
\begin{split}
\bigl(h_\lambda\ast\chi_\pi\bigr)(t)
& =
\int_G h_\lambda(s)\trace \bigl(\pi(s^{-1}t)\bigr) \, ds\\
& =
\trace \left(\int_G h_\lambda(s)\pi(s^{-1}t) \, ds\right)\\
& =
\trace \left(\int_G h_\lambda(s)\pi(s^{-1})\pi(t) \, ds\right)\\
& =
\trace \left(\left(\int_G h_\lambda(s)\pi(s^{-1}) \, ds\right)\pi(t)\right)\\
& =
\trace \bigl(\widehat{h_\lambda}(\pi)\pi(t)\bigr)\\
& =
\trace \bigl(\alpha_\lambda(\pi)\pi(t)\bigr)\\
& =
\alpha_\lambda(\pi)\trace \bigl(\pi(t)\bigr)\\
& =
\alpha_\lambda(\pi)\chi_\pi(t).
\end{split}
\end{equation*}
From Theorem~\ref{t1} and the polarization formula we deduce that
\begin{equation*}
\begin{split}
\varphi \left( f_1,\dotsc,f_n \right)
& =
\frac{1}{n!}\sum_{\sigma\in\mathfrak{S}_n} \Phi_0 \left( f_{\sigma(1)}\ast\dotsb\ast f_{\sigma(n)} \right)\\
& =
\frac{1}{n!}\sum_{[\pi]\in\widehat{G}}\sum_{\sigma\in\mathfrak{S}_n}
\varphi\bigl(
d_\pi f_{\sigma(1)}\ast\dotsb\ast f_{\sigma(n)}\ast\chi_\pi,d_\pi\chi_\pi,\dotsc,d_\pi\chi_\pi
\bigr)
\end{split}
\end{equation*}
for all $f_1,\dotsc,f_n\in\mathcal{T}(G)$.
Pick $f\in\mathcal{T}(G)$, and set
$\mathcal{F}=\bigl\{[\pi] : f\ast\chi_\pi\ne 0\bigr\}$.
We apply the above equation in the case where $f_1=f$ and $f_2=\dotsb=f_n=h_\lambda$ with $\lambda\in\Lambda$.
Since
\[
f_{\sigma(1)}\ast\dotsb \ast f_{\sigma(n)}\ast\chi_\pi=
f\ast h_\lambda\ast\dotsb\ast h_\lambda\ast\chi_\pi=
\alpha_\lambda(\pi)^{n-1}f\ast\chi_\pi
\]
for each $\sigma\in\mathfrak{S}_n$,
it follows that
\begin{equation*}
\begin{split}
\varphi \left( f,h_\lambda,\dotsc,h_\lambda \right)
& =
\sum_{[\pi]\in\widehat{G}}
\alpha_\lambda(\pi)^{n-1} \varphi \left( d_\pi f\ast\chi_\pi,d_\pi \chi_\pi,\dotsc,d_\pi \chi_\pi \right)\\
& =
\sum_{[\pi]\in\mathcal{F}}
\alpha_\lambda(\pi)^{n-1} \varphi \left( d_\pi f\ast\chi_\pi,d_\pi \chi_\pi,\dotsc,d_\pi \chi_\pi \right).
\end{split}
\end{equation*}
Since $\mathcal{F}$ is finite (Lemma~\ref{l2}(2)) and  
$\lim_{\lambda\in\Lambda}\alpha_\lambda(\pi)=1$ for each $[\pi]\in\widehat{G}$, we see that
the net $\bigl(\varphi \left(f,h_\lambda,\dotsc,h_\lambda \right)\bigr)_{\lambda\in\Lambda}$ is convergent and 
\begin{equation}\label{e13}
\lim_{\lambda\in\Lambda}\varphi \left( f,h_\lambda,\dotsc,h_\lambda \right)=
\sum_{[\pi]\in\mathcal{F}}\varphi \left( d_\pi f\ast\chi_\pi,d_\pi \chi_\pi,\dotsc,d_\pi \chi_\pi \right)=
\Phi_0(f).
\end{equation}
On the other hand, for each $\lambda\in\Lambda$, we have
\begin{equation}\label{e14}
\left\Vert \varphi \left( f,h_\lambda,\dotsc,h_\lambda \right)\right\Vert
\le
\left\Vert \varphi \right\Vert \left\Vert h_\lambda\right\Vert^{n-1} 
\left\Vert f\right\Vert \le \left\Vert \varphi \right\Vert \left\Vert f \right\Vert.
\end{equation}
By \eqref{e13} and \eqref{e14}, 
\[
\left\Vert \Phi_0(f) \right\Vert \le \left\Vert \varphi \right\Vert
\left\Vert f \right\Vert,
\]
which gives the continuity of $\Phi_0$.

Since $\mathcal{T}(G)$ is dense in $L^1(G)$ and $\Phi_0$ is continuous, 
there exists a unique continuous linear map $\Phi\colon L^1(G)\to X$ which 
extends $\Phi_0$. 
Since both $P$ and $\Phi$ are continuous, \eqref{e12} gives
$P(f)=\Phi \left(f^{*n} \right)$ for each $f\in L^1(G)$.

Our final task is to prove the uniqueness of the map $\Phi$.
Suppose that $\Psi\colon L^1(G)\to X$ is a continuous linear map
such that $P(f)=\Psi\left( f^{*n} \right) $ for each $f\in L^1(G)$.
By Theorem~\ref{t1}, $\Psi(f)=\Phi(f)\bigl(=\Phi_0(f)\bigr)$
for each $f\in\mathcal{T}(G)$.
Since $\mathcal{T}(G)$ is dense in $L^1(G)$, and both
$\Phi$ and $\Psi$ are continuous, it follows
that $\Psi(f)=\Phi(f)$ for each $f\in L^1(G)$.
\end{proof}

\section{Orthogonally additive polynomials on the convolution algebras $L^p(\mathbb{T})$, $1<p\le\infty$, and $C(\mathbb{T})$}

The next examples show that, if
$A$ is any of the convolution algebras $L^p(\mathbb{T})$, for $1< p\le\infty$, or $C(\mathbb{T})$,
then there exists an orthogonally additive, continuous $2$-homogeneous polynomial $P\colon A\to\mathbb{C}$
which cannot be expressed in the form $P(f)=\Phi \left(f\ast f\right)$ $(f\in A)$ for any continuous linear functional 
$\Phi\colon A\to\mathbb{C}$. Throughout this section,
$\mathbb{T}$ denotes the circle group $\{z\in\mathbb{C} : \vert z\vert=1\}$, and,
for $f\in L^1(\mathbb{T})$ and $k\in\mathbb{Z}$, $\widehat{f}(k)$ denotes the $k$th Fourier
coefficient of $f$.
For each $k\in\mathbb{Z}$, let $\chi_k\colon\mathbb{T}\to\mathbb{C}$ be the function defined by
\[
\chi_k(z)=z^k \quad (z\in\mathbb{T}).
\]  
Then
\begin{equation}\label{ca00}
\chi_k\ast \chi_k=\chi_k \quad (k\in\mathbb{Z})
\end{equation}
and
\begin{equation}\label{ca0}
\widehat{\chi_k}(j)=\delta_{jk} \quad (j,k\in\mathbb{Z}).
\end{equation}

\begin{example}\label{ca11}
Assume that $1<p<2$.
Set $q=\frac{p}{p-1}$, $r=\frac{p}{2-p}$, and $s=\frac{q}{2}$,
so that $\tfrac{1}{p}+\tfrac{1}{q}=1$ and $\tfrac{1}{r}+\tfrac{1}{s}=1$.
Take $h\in L^p(\mathbb{T})$ such that 
\begin{equation}\label{ca1}
\sum_{k=-\infty}^{+\infty}\vert\widehat{h}(k)\vert^s=+\infty.
\end{equation}
Such a choice is possible because of \cite[13.5.3(1)]{E}, since $s<q$.
We claim that there exists $a\in\ell^r(\mathbb{Z})$ such that
the sequence $\bigl(\sum_{k=-m}^m a(k)\widehat{h}(k)\bigr)$ does not converge.
To see this, we define the sequence $(\phi_m)$ in the dual of $\ell^r(\mathbb{Z})$ by
\[
\phi_m(a)=\sum_{k=-m}^m a(k)\widehat{h}(k) \quad (a\in\ell^r(\mathbb{Z}), \ m\in\mathbb{N}).
\]
It is immediate to check that
\[
\Vert\phi_m\Vert=\left(\sum_{k=-m}^m\bigl\vert\widehat{h}(k)\bigr\vert^s\right)^{1/s} \quad (m\in\mathbb{N}).
\]
From \eqref{ca1} we deduce that $(\lVert\phi_m\rVert)$ is unbounded, and the Banach-Steinhaus theorem then shows that
there exists $a\in\ell^r(\mathbb{Z})$ such that $(\phi_m(a))$ does not converge, as claimed.

Let $f\in L^p(\mathbb{T})$.
Then the Hausdorff-Young theorem (\cite[13.5.1]{E}) yields 
$\Vert\widehat{f} \, \Vert_q\le\Vert f\Vert_p$.
By H\"{o}lder's inequality, we have
\[
\sum_{k=-\infty}^{+\infty}\bigl\vert a(k)\widehat{f}(k)^2\bigr\vert\le
\left\Vert a \right\Vert_r\bigl\Vert\widehat{f}^{\ 2}\bigr\Vert_s =
\left\Vert a\right\Vert_r\bigl\Vert\widehat{f} \, \bigr\Vert_q^2\le
\left\Vert a\right\Vert_r \left\Vert f \right\Vert_p^2.
\]
This allows us to define an orthogonally additive continuous $2$-homogeneous polynomial $P\colon L^p(\mathbb{T})\to\mathbb{C}$ by
\[
P(f)=
\sum_{k=-\infty}^{+\infty}a(k)\widehat{f\ast f}(k)=
\sum_{k=-\infty}^{+\infty}a(k)\widehat{f}(k)^2 \quad (f\in L^p(\mathbb{T})).
\]
Suppose that $P$ can be expressed as $P(f)=\Phi(f\ast f)$ $(f\in L^p(\mathbb{T}))$ 
for some continuous linear functional $\Phi\colon L^p(\mathbb{T})\to\mathbb{C}$.
By \eqref{ca00} and \eqref{ca0}, we have
\[
\Phi(\chi_k)=\Phi(\chi_k\ast \chi_k)=P(\chi_k)=
\sum_{j=-\infty}^{+\infty}a(j)\widehat{\chi_k}(j)=
\sum_{j=-\infty}^{+\infty}a(j){\delta_{jk}}=
a(k)
\]
for each $k\in\mathbb{Z}$.
If $f\in L^p(\mathbb{T})$, then Riesz's theorem~\cite[12.10.1]{E} shows that the sequence
$(\sum_{k=-m}^m\widehat{f}(k)\chi_k)$ converges to $f$ in $L^p(\mathbb{T})$. Since $\Phi$ is continuous,
it follows that the sequence
\[
\left(\Phi\left(\sum_{k=-m}^m\widehat{f}(k)\chi_k\right)\right)=
\left(\sum_{k=-m}^m\widehat{f}(k)\Phi(\chi_k)\right)=
\left(\sum_{k=-m}^m\widehat{f}(k)a(k)\right)
\]
converges to $\Phi(f)$. 
In particular, the sequence
$\bigl(\sum_{k=-m}^m\widehat{h}(k)a(k)\bigr)$ is convergent, 
which contradicts the choice of $h$.
\end{example}

\begin{example}\label{ca12}
Assume that $2\le p<\infty$.
If $f\in L^p(\mathbb{T})$, then $f\in L^2(\mathbb{T})$ and therefore 
$\Vert\widehat{f} \, \Vert_2=\Vert f\Vert_2\le\Vert f\Vert_p$.
This allows us to define an orthogonally additive continuous $2$-homogeneous polynomial $P\colon L^p(\mathbb{T})\to\mathbb{C}$ by
\begin{equation*}
P(f)=
\sum_{k=-\infty}^{+\infty}\widehat{f\ast f}(k) =
\sum_{k=-\infty}^{+\infty}\widehat{f}(k)^2 \quad (f\in L^p(\mathbb{T})).
\end{equation*}
Suppose that $P$ can be represented in the form 
$P(f)=\Phi(f\ast f)$ $(f\in L^p(\mathbb{T}))$ for some continuous linear functional 
$\Phi\colon L^p(\mathbb{T})\to\mathbb{C}$. Let $h\in L^q(\mathbb{T})$ be such that
\begin{equation}\label{ca3}
\Phi(f)=\int_\mathbb{T}f(z)h(z) \, dz \quad (f\in L^p(\mathbb{T})).
\end{equation}
By \eqref{ca00} and \eqref{ca0}, for each $k\in\mathbb{Z}$, we have
\[
\Phi(\chi_k)=\Phi(\chi_k\ast \chi_k)=P(\chi_k)=
\sum_{j=-\infty}^{+\infty}\widehat{\chi_k}(j)= 
\sum_{j=-\infty}^{+\infty}{\delta_{jk}}= 1,
\]
and \eqref{ca3} then yields
\begin{equation*}
\widehat{h}(k)=\int_\mathbb{T}z^{-k}h(z) \, dz=\Phi(\chi_{-k})=1,
\end{equation*}
contrary to  Riemann-Lebesgue lemma.
\end{example}

\begin{example}
If $f\in L^\infty(\mathbb{T})$, 
then $f\in L^2(\mathbb{T})$ and therefore
$\Vert\widehat{f} \, \Vert_2=\Vert f\Vert_2\le\Vert f\Vert_\infty$, which implies that 
\[
\sum_{k=0}^{+\infty}\bigl\vert\widehat{f}(-k)\bigr\vert^2\le
\sum_{k=-\infty}^{+\infty}\bigl\vert\widehat{f}(k)\bigr\vert^2\le\Vert f\Vert_\infty^2.
\]
Hence we can define an orthogonally additive continuous $2$-homogeneous polynomial 
$P\colon L^\infty(\mathbb{T})\to\mathbb{C}$ by
\[
P(f)=
\sum_{k=0}^{+\infty}\widehat{f\ast f}(-k)=
\sum_{k=0}^{+\infty}\widehat{f}(-k)^2 \quad (f\in L^\infty(\mathbb{T})).
\]
Suppose that $P$ can be represented in the form 
$P(f)=\Phi(f\ast f)$ $(f\in L^\infty(\mathbb{T}))$ for some continuous linear functional 
$\Phi\colon L^\infty(\mathbb{T})\to\mathbb{C}$.
The restriction of $\Phi$ to $C(\mathbb{T})$ gives a continuous linear functional on $C(\mathbb{T})$
and therefore there exists a measure $\mu\in M(\mathbb{T})$ such that
\begin{equation}\label{ca2}
\Phi(f)=\int_\mathbb{T}f(z) \, d\mu(z) \quad (f\in C(\mathbb{T})).
\end{equation}
By \eqref{ca00} and \eqref{ca0}, for each $k\in\mathbb{Z}$, we have
\[
\Phi(\chi_k)=\Phi(\chi_k\ast\chi_k)=P(\chi_k)=
\sum_{j=0}^{+\infty}\widehat{\chi_k}(-j)= 
\sum_{j=0}^{+\infty}{\delta_{-jk}}= 
\begin{cases}
1 & \text{if $k\le 0$,}\\
0 & \text{if $k>0$,} 
\end{cases}
\]
and \eqref{ca2} then yields
\begin{equation*}
\widehat{\mu}(k)=
\int_\mathbb{T}z^{-k} \, d\mu (z) \, dz=
\Phi(\chi_{-k})=
\begin{cases}
1 & \text{if $k\ge 0$,}\\
0 & \text{if $k<0$.}
\end{cases}
\end{equation*}
This contradicts the fact that the series $\sum_{k\ge 0}\chi_k$ is not
a Fourier-Stieltjes series (see \cite[Example~12.7.8]{E}).
It should be pointed out that we have actually shown that neither $P$ nor the restriction of $P$ to $C(\mathbb{T})$
can be represented in the standard form.
\end{example}

\section{The largest appropriate norm topology on $\mathcal{P}_n(A)$}

Since Question Q2 has been settled in the negative for the algebras $L^p(G)$, with $1<p\le\infty$, and $C(G)$,
it is therefore reasonable to attempt to explore Question Q3 for them.
For this purpose, 
in this section, for each Banach algebra $A$, we make an appropriate choice of norm on $\mathcal{P}_n(A)$.

\begin{theorem}\label{g}
Let $A$ be a Banach algebra.
Then 
\begin{equation*}
\mathcal{P}_n(A)=
\biggl\{
\sum_{j=1}^m a_j^n :  a_1,\dotsc,a_m\in A, \ m\in\mathbb{N}
\biggr\}
\end{equation*}
and the formula
\begin{equation*}
\Vert a\Vert_{\mathcal{P}_n}=
\inf\biggl\{
\sum_{j=1}^m \left\Vert a_j \right\Vert^n : a=\sum_{j=1}^m a_j^n
\biggr\},
\end{equation*}
for each $a\in\mathcal{P}_n(A)$, defines a norm on $\mathcal{P}_n(A)$
such that
\begin{equation*}
\Vert a^n\Vert_{\mathcal{P}_n}\le \left\Vert a\right\Vert^n \quad (a\in A)
\end{equation*}
and
\begin{equation*}
\left\Vert a \right\Vert \le \left\Vert a \right\Vert_{\mathcal{P}_n} \quad (a\in\mathcal{P}_n(A)).
\end{equation*}
Further, the following statements hold.
\begin{enumerate}%[(1)]
\item
Suppose that $\Phi\colon\mathcal{P}_n(A)\to X$ is a $\norm{\cdot}_{\mathcal{P}_n}$-continuous linear map for some Banach space $X$.
Then the map $P\colon A\to X$ defined by $P(a)=\Phi \left(a^n \right)$ $(a\in A)$ is an  orthogonally additive continuous
$n$-homogeneous polynomial with $\Vert P\Vert=\Vert\Phi\Vert$.
\item
Suppose that $\tnorma{\cdot}$ is a norm on $\mathcal{P}_n(A)$
for which the answer to Question Q3 is positive. Then there exist
$M_1,M_2\in\mathbb{R}^+$ such that
\[
M_1 \left\Vert a \right\Vert \le \vert\vert\vert a\vert\vert\vert 
\le M_2 \left\Vert a \right\Vert_{\mathcal{P}_n}
\quad (a\in\mathcal{P}_n(A)).
\]
\end{enumerate}
\end{theorem}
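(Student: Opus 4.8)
The plan is to establish the four assertions in the order listed, treating the structural description of $\mathcal{P}_n(A)$ and the basic properties of the norm first, then items (1) and (2). For the description of $\mathcal{P}_n(A)$, I would use that every $\lambda\in\mathbb{C}$ is an $n$th power, $\lambda=\mu^n$ with $\mu\in\mathbb{C}$; hence $\lambda a^n=(\mu a)^n$, so the linear span of $\{a^n:a\in A\}$ already coincides with the set of finite sums $\sum_{j=1}^m a_j^n$, the reverse inclusion being trivial. For the functional $\norm{\cdot}_{\mathcal{P}_n}$, the infimum is over a nonempty set by the previous sentence; absolute homogeneity follows from the same root trick, applied to $\lambda$ and then to $\lambda^{-1}$; the triangle inequality follows by concatenating representations of $a$ and of $b$. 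The bound $\norm{a^n}_{\mathcal{P}_n}\le\norm{a}^n$ is the trivial representation $a^n=a^n$, while $\norm{x}\le\norm{x}_{\mathcal{P}_n}$ for $x=\sum_j a_j^n$ follows from $\norm{\sum_j a_j^n}\le\sum_j\norm{a_j}^n$; this last inequality also yields positive definiteness, so $\norm{\cdot}_{\mathcal{P}_n}$ is a norm.

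For item (1), given a $\norm{\cdot}_{\mathcal{P}_n}$-continuous linear map $\Phi$ I set $P(a)=\Phi(a^n)$, which makes sense since $a^n\in\mathcal{P}_n(A)$ always. Polarising the $A$-valued polynomial $a\mapsto a^n$ shows that $S_n(a_1,\dots,a_n)$ lies in $\mathcal{P}_n(A)$, so $\varphi(a_1,\dots,a_n):=\Phi\bigl(S_n(a_1,\dots,a_n)\bigr)$ is a well-defined symmetric $n$-linear map with $\varphi(a,\dots,a)=\Phi(a^n)=P(a)$; thus $P$ is an $n$-homogeneous polynomial. Orthogonal additivity is immediate, since $ab=ba=0$ forces $(a+b)^n=a^n+b^n$. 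Finally $\norm{P(a)}=\norm{\Phi(a^n)}\le\norm{\Phi}\,\norm{a^n}_{\mathcal{P}_n}\le\norm{\Phi}\,\norm{a}^n$ gives continuity and $\norm{P}\le\norm{\Phi}$, while for $x=\sum_{j=1}^m a_j^n$ one has $\Phi(x)=\sum_j P(a_j)$, so $\norm{\Phi(x)}\le\norm{P}\sum_j\norm{a_j}^n$; infimising over representations gives $\norm{\Phi(x)}\le\norm{P}\,\norm{x}_{\mathcal{P}_n}$, i.e. $\norm{\Phi}\le\norm{P}$, whence $\norm{P}=\norm{\Phi}$.

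For item (2), suppose $\tnorma{\cdot}$ is a norm on $\mathcal{P}_n(A)$ for which Question~Q3 is positive. For the upper estimate I would let $Y$ be the completion of $(\mathcal{P}_n(A),\tnorma{\cdot})$ and $\iota\colon\mathcal{P}_n(A)\to Y$ the isometric inclusion, which is $\tnorma{\cdot}$-continuous; the first demand of Q3 then makes $P(a)=\iota(a^n)$ a continuous $n$-homogeneous polynomial, so $\tnorma{a^n}=\norm{\iota(a^n)}\le\norm{P}\,\norm{a}^n$ for every $a\in A$, and infimising over representations $x=\sum_j a_j^n$ gives $\tnorma{x}\le\norm{P}\,\norm{x}_{\mathcal{P}_n}$; thus $M_2=\norm{P}$ works. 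For the lower estimate I would apply the second demand of Q3 to the orthogonally additive continuous $n$-homogeneous polynomial $P\colon A\to A$, $P(a)=a^n$: there is a $\tnorma{\cdot}$-continuous linear map $\Phi\colon\mathcal{P}_n(A)\to A$ with $\Phi(a^n)=a^n$ for all $a\in A$, and since $\mathcal{P}_n(A)$ is spanned by the $n$th powers, $\Phi$ must be the inclusion $\mathcal{P}_n(A)\hookrightarrow A$; hence $\norm{x}=\norm{\Phi(x)}\le\norm{\Phi}\,\tnorma{x}$, so $M_1=\norm{\Phi}^{-1}$ works (the degenerate case $\mathcal{P}_n(A)=\{0\}$ being trivial).

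There is no deep obstacle here, but two points need care and I regard them as the crux. In item (1), one must check that $P$ is genuinely an $n$-homogeneous \emph{polynomial}, which rests on the observation, via the polarisation formula applied to $a\mapsto a^n$, that $S_n(a_1,\dots,a_n)\in\mathcal{P}_n(A)$, so that the candidate symmetric $n$-linear map $\Phi\circ S_n$ is defined. In item (2), the real work is identifying the right test polynomials: the $A$-valued $n$th-power map $a\mapsto a^n$ for the lower bound, whose representing map is forced to be the inclusion of $\mathcal{P}_n(A)$ into $A$, and the inclusion of $\mathcal{P}_n(A)$ into the completion of $(\mathcal{P}_n(A),\tnorma{\cdot})$ for the upper bound. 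Everything else is routine manipulation of the defining infimum.
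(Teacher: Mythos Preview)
Your proposal is correct and follows essentially the same approach as the paper: the $n$th-root trick for the description of $\mathcal{P}_n(A)$ and for homogeneity, concatenation for the triangle inequality, the trivial representation for $\norm{a^n}_{\mathcal{P}_n}\le\norm{a}^n$, submultiplicativity for $\norm{\cdot}\le\norm{\cdot}_{\mathcal{P}_n}$, and in item~(2) the same two test polynomials (the $A$-valued $n$th-power map for $M_1$, the inclusion into the completion of $(\mathcal{P}_n(A),\tnorma{\cdot})$ for $M_2$). You are in fact slightly more explicit than the paper in item~(1), where you spell out via polarisation why $P$ is an $n$-homogeneous polynomial and why it is orthogonally additive; the paper takes these for granted from the introductory discussion.
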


\begin{proof}
Let $a_1,\dotsc,a_m\in A$ and $\alpha_1,\dotsc,\alpha_m\in\mathbb{C}$.
Take $\beta_1,\dotsc,\beta_m\in\mathbb{C}$ such that $\beta_j^n=\alpha_j$ $(j\in\{1,\dotsc,m\})$.
Then 
\[
\sum_{j=1}^m\alpha_j a_j^n=\sum_{j=1}^m \left( \beta_j a_j \right)^n,
\]
which establishes the first equality of the result.

Take $a\in\mathcal{P}_n(A)$, and let $a_1,\dotsc,a_m\in A$ be such that $a=\sum_{j=1}^m a_j^n$. 
Then
\[
\left\Vert a \right\Vert 
\le \sum_{j=1}^m \left\Vert a_j^n \right\Vert 
\le \sum_{j=1}^m \left\Vert a_j \right\Vert^n,
\]
which proves that $\left\Vert a \right\Vert \le \left\Vert a \right\Vert_{\mathcal{P}_n}$.
In particular, if $a\in A$ is such that $\left\Vert a \right\Vert_{\mathcal{P}_n}=0$, then we have $a=0$.

Set $a\in\mathcal{P}_n(A)$ and $\alpha\in\mathbb{C}$.
We proceed to show that $\left\Vert \alpha a \right\Vert_{\mathcal{P}_n}=\left\vert \alpha \right\vert \left\Vert a \right\Vert_{\mathcal{P}_n}$.
Of course, we can assume that $\alpha\ne 0$.
Choose $\beta\in\mathbb{C}$ such that $\beta^n=\alpha$.
If $a_1,\dotsc,a_m\in A$ are such that $a=\sum_{j=1}^m a_j^n$
then
$\alpha a=\sum_{j=1}^m \left( \beta a_j\right)^n$ and therefore
\[
\left\Vert\alpha a\right\Vert_{\mathcal{P}_n}\le
\sum_{j=1}^m \left\Vert\beta a_j \right\Vert^n=
\sum_{j=1}^m \left\vert \alpha \right\vert \left\Vert a_j\right\Vert^n,
\]
which implies that $\left\Vert \alpha a \right\Vert_{\mathcal{P}_n}\le \left\vert \alpha \right\vert \left\Vert a \right\Vert_{\mathcal{P}_n}$.
On the other hand, 
\[
\left\Vert a \right\Vert_{\mathcal{P}_n}=
\left\Vert \alpha^{-1}(\alpha a) \right\Vert_{\mathcal{P}_n}\le
\left\vert\alpha\right\vert^{-1} \left\Vert\alpha a \right\Vert_{\mathcal{P}_n},
\]
which gives the converse inequality.

Let $a$, $b\in\mathcal{P}_n(A)$. 
Our goal is to prove that 
$\left\Vert a+b \right\Vert_{\mathcal{P}_n}\le \left\Vert a \right\Vert_{\mathcal{P}_n}+ \left\Vert b \right\Vert_{\mathcal{P}_n}$.
To this end, set $\varepsilon\in\mathbb{R}^+$, and choose 
$a_1,\dotsc,a_l,b_1,\dotsc,b_m\in A$ such that
\[
a=\sum_{j=1}^l a_j^n, \quad b=\sum_{j=1}^m b_j^n,
\]
and
\[
\sum_{j=1}^l \left\Vert a_j\right\Vert^n<\left\Vert a\right\Vert_{\mathcal{P}_n}+\varepsilon/2, \quad
\sum_{j=1}^m \left\Vert b_j\right\Vert^n<\left\Vert b\right\Vert_{\mathcal{P}_n}+\varepsilon/2.
\]
Then we have
\[
a+b=\sum_{j=1}^l a_j^n+\sum_{j=1}^m b_j^n
\]
and therefore
\[
\left\Vert a+b \right\Vert_{\mathcal{P}_n}\le
\sum_{j=1}^l \left\Vert a_j \right\Vert^n + \sum_{j=1}^m \left\Vert b_j\right \Vert^n\le
\left\Vert a \right\Vert_{\mathcal{P}_n} + \left\Vert b \right\Vert_{\mathcal{P}_n}+\varepsilon,
\]
which yields $\left\Vert a+b \right\Vert_{\mathcal{P}_n}\le \left\Vert a \right\Vert_{\mathcal{P}_n} + \left\Vert b \right\Vert_{\mathcal{P}_n}$.
Then $\norm{\cdot}_{\mathcal{P}_n}$ is a norm on $\mathcal{P}_n(A)$.
The space $\mathcal{P}_n(A)$ is equipped with this norm for the remainder of this proof. 

It is clear that $\Vert a^n\Vert_{\mathcal{P}_n}\le\Vert a\Vert^n$ for each $a\in A$.
This property allows us to establish the statement $(1)$.
Suppose that $X$ is a Banach space and that $\Phi\colon\mathcal{P}_n(A)\to X$ is a continuous linear map.
Define $P\colon A\to X$ by $P(a)=\Phi \left(a^n \right)$ $(a\in A)$.
Take $a\in A$.
Then we have
\[
\norm{P(a)} =
\norm{\Phi\left( a^n \right) } \le
\left\Vert \Phi \right\Vert \left\Vert a^n \right\Vert_{\mathcal{P}_n}\le
\left\Vert \Phi \right\Vert \left\Vert a\right\Vert^n.
\]
which shows that the polynomial $P$ is continuous and that $\Vert P\Vert\le\Vert\Phi\Vert$.
On the other hand, let $a_1,\dotsc,a_m\in A$ be such that $a=\sum_{j=1}^m a_j^n$. Then we have
\[
\Phi(a)=\sum_{j=1}^m\Phi(a_j^n)=\sum_{j=1}^m P(a_j),
\]
whence
\[
\Vert\Phi(a)\Vert\le
\sum_{j=1}^m \left\Vert P(a_j)\right\Vert \le
\sum_{j=1}^m \left\Vert P \right\Vert \left\Vert a_j\right\Vert^n=
\Vert P\Vert\sum_{j=1}^m \left\Vert a_j \right\Vert^n.
\]
This shows that $\left\Vert \Phi(a) \right\Vert \le \left\Vert P \right\Vert \left\Vert a \right\Vert_{\mathcal{P}_n}$,
hence that $\left\Vert \Phi \right\Vert \le \left\Vert P \right\Vert$, and finally that $\Vert\Phi\Vert=\Vert P\Vert$. 

We now prove $(2)$.
Suppose that $\tnorma{\cdot}$ is a norm on $\mathcal{P}_n(A)$ which satisfies
the conditions:
\begin{itemize}
\item
for each Banach space $X$ and each $\tnorma{\cdot}$-continuous linear map 
$\Phi\colon\mathcal{P}_n(A)\to X$, the prototypical polinomial $P\colon A\to X$ defined by 
$P(a)=\Phi\left( a^n \right) $ $(a\in A)$ is continuous, and
\item
every  orthogonally additive continuous $n$-homogeneous polynomial $P$ from $A$
into each Banach space $X$ can be expressed as $P(a)=\Phi\left( a^n \right) $ $(a\in A)$  for some
$\tnorma{\cdot}$-continuous linear map 
$\Phi\colon\mathcal{P}_n(A)\to X$.
\end{itemize}
The canonical polynomial $P_n\colon A\to A$ is an  orthogonally additive
continuous $n$-homogeneous polynomial.
Hence there exists a $\tnorma{\cdot}$-continuous linear map 
$\Phi\colon\mathcal{P}_n(A)\to A$ such that $P_n(a)=\Phi\left( a^n \right) $ $(a\in A)$.
This implies that $\Phi$ is the inclusion map from $\mathcal{P}_n(A)$ into $A$,
and the continuity of this map yields $M_1\in\mathbb{R}^+$ such that 
$M_1\Vert a\Vert\le\vert\vert\vert a\vert\vert\vert$ for each $a\in\mathcal{P}_n(A)$.
We now take $X$ to be the completion of the normed space $(\mathcal{P}_n(A),\tnorma{\cdot})$,
and let $\Phi\colon \mathcal{P}_n(A)\to X$ be the inclusion map.
Then, by hypothesis, the polynomial  $P\colon A\to A$ defined by $P(a)=\Phi\left( a^n \right) $ $(a\in A)$ is continuous, and,
in consequence, there exists $M_2\in\mathbb{R}^+$ such that
$\vert\vert\vert a^n\vert\vert\vert\le M_2\Vert a\Vert^n$ for each $a\in A$.
Take $a\in\mathcal{P}_n(A)$, and 
let $a_1,\dotsc,a_m\in A$ be such that $a=\sum_{j=1}^m a_j^n$. Then we have
\[
\vert\vert\vert a\vert\vert\vert\le
\sum_{j=1}^m\vert\vert\vert a_j^n \vert\vert\vert\le
\sum_{j=1}^m M_2 \left\Vert a_j \right\Vert^n,
\]
which yields $\vert\vert\vert a\vert\vert\vert\le\Vert a\Vert_{\mathcal{P}_n}$,
and completes the proof.
\end{proof}

From now on $\mathcal{P}_n(A)$ will be equipped with the norm $\lVert\cdot\rVert_{\mathcal{P}_n}$
defined in Theorem~\ref{g}.

\begin{proposition}\label{p1827}
Let $A$ be a Banach algebra.
Then
\[
\mathcal{P}_n(A)=
\biggl\{
\sum_{j=1}^m S_n(a_{1,j},\dotsc,a_{n,j}) :  a_{1,j},\dotsc,a_{n,j}\in A, \ m\in\mathbb{N}
\biggr\}
\]
and the formula
\begin{equation*}
\Vert a\Vert_{\mathcal{S}_n}=
\inf\biggl\{
\sum_{j=1}^m \left\Vert a_{1,j} \right\Vert \cdots \left\Vert a_{n,j} \right\Vert : a=
\sum_{j=1}^m S_n(a_{1,j},\dotsc,a_{n,j})
\biggr\}, 
\end{equation*}
for each $a\in\mathcal{P}_n(A)$,
defines a norm on $\mathcal{P}_n(A)$ such that
\begin{equation*}
\left\Vert a \right\Vert \le
\left\Vert a \right\Vert_{\mathcal{S}_n}\le
\left\Vert a \right\Vert_{\mathcal{P}_n}\le
\frac{n^n}{n!} \left\Vert a \right\Vert_{\mathcal{S}_n}
\quad \left(a\in\mathcal{P}_n(A)\right).
\end{equation*}
\end{proposition}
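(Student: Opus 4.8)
The plan is to imitate the proof of Theorem~\ref{g}, the new ingredient being the polarization identity for the $n$-th power polynomial $P_n$, which lets one pass back and forth between $n$-th powers $a^n$ and symmetric products $S_n(a_1,\dotsc,a_n)$. For the set equality, one inclusion is immediate: the displayed set is a linear subspace of $A$ containing every $a^n=S_n(a,\dotsc,a)$, so it contains $\mathcal{P}_n(A)$ by Theorem~\ref{g}. For the reverse inclusion, apply the polarization formula (with $P=P_n$, $\varphi=S_n$) to get
\[
S_n(a_1,\dotsc,a_n)=\frac{1}{n!\,2^n}\sum_{\epsilon_1,\dotsc,\epsilon_n=\pm1}\epsilon_1\cdots\epsilon_n\bigl(\epsilon_1a_1+\dotsb+\epsilon_na_n\bigr)^n ,
\]
which is a linear combination of $n$-th powers and hence lies in $\mathcal{P}_n(A)$; since $\mathcal{P}_n(A)$ is a subspace, so does every finite sum of such terms.

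Next I would dispose of the norm axioms and the two easy inequalities. From $\norm{S_n(b_1,\dotsc,b_n)}\le\frac{1}{n!}\sum_{\sigma\in\mathfrak{S}_n}\norm{b_{\sigma(1)}}\cdots\norm{b_{\sigma(n)}}=\norm{b_1}\cdots\norm{b_n}$, applied term by term to any representation $a=\sum_jS_n(a_{1,j},\dotsc,a_{n,j})$ and then taking the infimum, one obtains $\norm{a}\le\norm{a}_{\mathcal{S}_n}$; in particular $\norm{\cdot}_{\mathcal{S}_n}$ is definite. Absolute homogeneity follows by absorbing the scalar into the first slot of each $S_n$ (and $\norm{0}_{\mathcal{S}_n}=0$ since $0=S_n(0,\dotsc,0)$), and subadditivity follows by concatenating representations of $a$ and $b$ — both exactly as in Theorem~\ref{g}. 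Finally, any representation $a=\sum_ja_j^n$ is also the representation $a=\sum_jS_n(a_j,\dotsc,a_j)$, whose cost is $\sum_j\norm{a_j}^n$; taking infima gives $\norm{a}_{\mathcal{S}_n}\le\norm{a}_{\mathcal{P}_n}$.

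The substantive step is $\norm{a}_{\mathcal{P}_n}\le\frac{n^n}{n!}\norm{a}_{\mathcal{S}_n}$, and here a naive use of polarization fails: it only yields $\norm{S_n(b_1,\dotsc,b_n)}_{\mathcal{P}_n}\le\frac{1}{n!}\bigl(\norm{b_1}+\dotsb+\norm{b_n}\bigr)^n$, and the arithmetic--geometric mean inequality goes the wrong way. The remedy — and the one point that really needs an idea — is to rescale before polarizing. Assuming all $b_i\ne0$ (otherwise $S_n(b_1,\dotsc,b_n)=0$ and there is nothing to do), set $b_i'=b_i/\norm{b_i}$, so $\norm{b_i'}=1$ and, by multilinearity of $S_n$, $S_n(b_1,\dotsc,b_n)=\norm{b_1}\cdots\norm{b_n}\,S_n(b_1',\dotsc,b_n')$. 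Applying the polarization identity to $S_n(b_1',\dotsc,b_n')$ and using that $\norm{\cdot}_{\mathcal{P}_n}$ is a norm together with $\norm{x^n}_{\mathcal{P}_n}\le\norm{x}^n$ (both from Theorem~\ref{g}), together with $\norm{\epsilon_1b_1'+\dotsb+\epsilon_nb_n'}\le n$, one gets
\[
\norm{S_n(b_1',\dotsc,b_n')}_{\mathcal{P}_n}\le\frac{1}{n!\,2^n}\sum_{\epsilon_1,\dotsc,\epsilon_n=\pm1}\norm{\epsilon_1b_1'+\dotsb+\epsilon_nb_n'}^n\le\frac{1}{n!\,2^n}\cdot2^n\cdot n^n=\frac{n^n}{n!} .
\]

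Multiplying by $\norm{b_1}\cdots\norm{b_n}$ and using the homogeneity of $\norm{\cdot}_{\mathcal{P}_n}$ gives $\norm{S_n(b_1,\dotsc,b_n)}_{\mathcal{P}_n}\le\frac{n^n}{n!}\norm{b_1}\cdots\norm{b_n}$. Summing this over a representation $a=\sum_jS_n(a_{1,j},\dotsc,a_{n,j})$, invoking subadditivity of $\norm{\cdot}_{\mathcal{P}_n}$, and taking the infimum over all such representations finishes the proof. The main obstacle is precisely this rescaling device; the rest is bookkeeping in parallel with Theorem~\ref{g}.
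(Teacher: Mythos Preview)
Your proof is correct and follows essentially the same route as the paper's: both inclusions of the set equality are obtained exactly as you describe, the norm axioms are handled by reference to the argument for $\norm{\cdot}_{\mathcal{P}_n}$, and the crucial inequality $\norm{a}_{\mathcal{P}_n}\le\frac{n^n}{n!}\norm{a}_{\mathcal{S}_n}$ is proved via the same normalization trick~--- replacing each $a_{l,j}$ by $a_{l,j}/\norm{a_{l,j}}$ before applying the polarization formula so that each term $\epsilon_1b_{1,j}'+\dotsb+\epsilon_nb_{n,j}'$ has norm at most $n$. The only cosmetic difference is that the paper writes out one explicit $\mathcal{P}_n$-representation of $a$ by absorbing all scalar factors under an $n$-th root, whereas you bound $\norm{S_n(b_1,\dotsc,b_n)}_{\mathcal{P}_n}$ termwise and then sum; these are the same computation.
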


\begin{proof}
Let $\mathcal{S}_n(A)$ denote the set of the right-hand side of the first identity in the result.
Take $a\in\mathcal{P}_n(A)$.
Then there exists $a_1,\dotsc,a_m\in A$ such that 
\[
a=\sum_{j=1}^m a_j^n=\sum_{j=1}^m S_n(a_j,\dotsc,a_j)\in\mathcal{S}_n(A).
\]
This also implies that
$\left\Vert a \right\Vert_{\mathcal{S}_n}\le\sum_{j=1}^m \left\Vert a_j \right\Vert \dotsb \left\Vert a_j \right\Vert$,
and hence that 
$\left\Vert a \right\Vert_{\mathcal{S}_n}\le \left\Vert a \right\Vert_{\mathcal{P}_n}$.
We now take $a\in\mathcal{S}_n(A)$. 
Then there exist $a_{1,j},\dotsc,a_{n,j}\in A$ $(j\in\{1,\dotsc,m\})$ such that
$a=\sum_{j=1}^m S_n(a_{1,j},\dotsc,a_{n,j})$.
The polarization formula gives
\begin{equation*}
\begin{split}
a & =
\sum_{j=1}^m
\frac{1}{n!\,2^n}\sum_{\epsilon_1,\ldots,\epsilon_n=\pm 1}
\epsilon_{1}\cdots\epsilon_{n} \left(\epsilon_1a_{1,j}+\dots+\epsilon_{n}a_{n,j} \right)^n\\
& =
\sum_{j=1}^m
\sum_{\epsilon_1,\ldots,\epsilon_n=\pm 1}
\Bigl(
\bigl(\tfrac{1}{n!\,2^n}\bigr)^{1/n}\epsilon_{1}^{1/n}\cdots\epsilon_{n}^{1/n}\left(\epsilon_1a_{1,j}+\dotsb+\epsilon_{n}a_{n,j} \right)\Bigr)^n
\in
\mathcal{P}_n(A).
\end{split}
\end{equation*}

The proof of the fact that $\left\Vert \cdot \right\Vert_{\mathcal{S}_n}$ is a norm on $\mathcal{P}_n(A)$ 
is similar to Theorem~\ref{g}, and it is omitted.

Our next objective is to prove the inequalities relating the three norms 
$\norm{\cdot}$,
$\norm{\cdot}_{\mathcal{S}_n}$, and
$\norm{\cdot}_{\mathcal{P}_n}$.
Take $a\in\mathcal{P}_n(A)$.
We have already shown that $\left\Vert a \right\Vert_{\mathcal{S}_n} \le 
\left\Vert a \right\Vert_{\mathcal{P}_n}$.
Let $a_{1,j},\dotsc, a_{n,j}\in A$ $(j\in\{1,\dotsc,m\})$
be such that $a=\sum_{j=1}^m S_n(a_{1,j},\dotsc, a_{n,j})$.
Then
\[
\norm{a} \le
\sum_{j=1}^m \left\Vert S_n \left(a_{1,j},\dotsc, a_{n,j} \right) \right\Vert
\le \sum_{j=1}^m \left\Vert a_{1,j} \right\Vert \dotsb \left\Vert a_{n,j} \right\Vert,
\]
which shows that $\norm{a} \le \left\Vert a \right\Vert_{\mathcal{S}_n}$.
For $l\in\left\{1,\dotsc, n \right\}$ and $j\in \left\{ 1,\dotsc, m \right\}$,
we take $b_{l,j}=a_{l,j}/\left\Vert a_{l,j} \right\Vert$ if $a_{l,j}\ne 0$ and $b_{l,j}=0$ otherwise.
Then
\[
a=\sum_{j=1}^m S_n \left(a_{1,j},\dotsc,a_{n,j} \right) =
\sum_{j=1}^m \left\Vert a_{1,j} \right\Vert \dotsb \left\Vert a_{n,j} \right\Vert S_n \left(b_{1,j},\dotsc, b_{n,j} \right),
\]
and the polarization formula gives
\begin{multline*}
a=
\sum_{j=1}^m
\left\Vert a_{1,j} \right\Vert \dotsb \left\Vert a_{n,j} \right\Vert
\frac{1}{n!\,2^n}\sum_{\epsilon_1,\ldots,\epsilon_n=\pm 1}
\epsilon_{1}\cdots\epsilon_{n} \left( \epsilon_1b_{1,j}+ \dotsb +\epsilon_{n}b_{n,j} \right)^n \\
= \sum_{j=1}^m\sum_{\epsilon_1,\ldots,\epsilon_n=\pm 1}
\Bigl(\bigl( \left\Vert a_{1,j} \right\Vert \dotsb \left\Vert a_{n,j} \right\Vert
\tfrac{1}{n!\,2^n}\bigr)^{1/n}
\epsilon_{1}^{1/n} \dotsb \epsilon_{n}^{1/n}
\bigl(\epsilon_1b_{1,j}+\dotsb +\epsilon_{n}b_{n,j}\bigr)\Bigr)^{\!n}.
\end{multline*}
We thus get
\begin{align*}
\left\Vert a \right\Vert_{\mathcal{P}_n}  & \le
\sum_{j=1}^m\sum_{\epsilon_1,\ldots,\epsilon_n=\pm 1}
\Bigl\Vert\bigl( \left\Vert a_{1,j} \right\Vert \dotsb \left\Vert a_{n,j} \right\Vert
\tfrac{1}{n!\,2^n}\bigr)^{1/n}
\epsilon_{1}^{1/n}\dotsb\epsilon_{n}^{1/n}
\bigl(\epsilon_1b_{1,j}+ \dotsb +\epsilon_{n}b_{n,j}\bigr)\Bigr\Vert^n \\
& = 
\sum_{j=1}^m\sum_{\epsilon_1,\ldots,\epsilon_n=\pm 1}
\left\Vert a_{1,j} \right\Vert \dotsb \left\Vert a_{n,j} \right\Vert
\frac{1}{n!\,2^n}
\bigl\Vert\epsilon_1b_{1,j}+\dotsb +\epsilon_{n}b_{n,j}\bigr\Vert^n \\
& \le
\sum_{j=1}^m\sum_{\epsilon_1,\ldots,\epsilon_n=\pm 1}
\left\Vert a_{1,j} \right\Vert \dotsb \left\Vert a_{n,j} \right\Vert
\frac{1}{n!\,2^n} n^n= 
\sum_{j=1}^m \left\Vert a_{1,j} \right\Vert \dotsb \left\Vert a_{n,j} \right\Vert
\frac{n^n}{n!},
\end{align*}
which shows that $\norm{a}_{\mathcal{P}_n}\le\frac{n^n}{n!}\norm{a}_{\mathcal{S}_n}$.
\end{proof}

\begin{proposition}\label{p19}
Let $A$ be a Banach algebra with a central bounded approximate identity 
of bound $M$. Then $\mathcal{P}_n(A)=A$ and
$\norm{\cdot} \le \norm{\cdot}_{\mathcal{S}_n} \le M^{n-1} \norm{\cdot}$.
\end{proposition}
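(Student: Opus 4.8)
The plan is to exploit the central bounded approximate identity $(e_\lambda)_{\lambda\in\Lambda}$ of bound $M$ in order to write an arbitrary $a\in A$ as a limit of elements of the form $S_n(a,e_\lambda,\dotsc,e_\lambda)$, and then to control the $\norm{\cdot}_{\mathcal{S}_n}$-norm of each such element. First I would observe that, since the approximate identity is central, for each $\lambda\in\Lambda$ we have $S_n(a,e_\lambda,\dotsc,e_\lambda)=a\ast e_\lambda^{\ast(n-1)}$ (all the $n!$ permutations coincide because $e_\lambda$ commutes with everything), which is an element of $\mathcal{S}_n(A)=\mathcal{P}_n(A)$; and $a\ast e_\lambda^{\ast(n-1)}\to a$ because $(e_\lambda)$ is an approximate identity (iterating $a\ast e_\lambda\to a$ and using boundedness). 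This shows that $A$ is contained in the $\norm{\cdot}$-closure of $\mathcal{P}_n(A)$. To get the genuine equality $\mathcal{P}_n(A)=A$, rather than just density, I would use the estimate derived in the next step together with completeness of $A$: the sequence of approximants will be norm-Cauchy in $\norm{\cdot}_{\mathcal{S}_n}$, hence (by $\norm{\cdot}\le\norm{\cdot}_{\mathcal{S}_n}$, established in Proposition~\ref{p1827}) its $\norm{\cdot}$-limit $a$ actually lies in $\mathcal{P}_n(A)$. Alternatively, and more cleanly, one can simply use Cohen's factorization theorem to write $a=b\ast c$ and then $a=b\ast c = $ a suitable element of $\mathcal{P}_n(A)$; but I expect the approximate-identity argument to suffice.

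The key quantitative step is the inequality $\norm{a}_{\mathcal{S}_n}\le M^{n-1}\norm{a}$. Here I would argue as follows: fix $a\in A$ and $\varepsilon>0$. For $\lambda$ large enough, $\norm{a-a\ast e_\lambda^{\ast(n-1)}}<\varepsilon$; write $a=a\ast e_\lambda^{\ast(n-1)}+r_\lambda$ with $\norm{r_\lambda}<\varepsilon$. Now $a\ast e_\lambda^{\ast(n-1)}=S_n(a,e_\lambda,\dotsc,e_\lambda)$, so it contributes $\norm{a}\,\norm{e_\lambda}^{n-1}\le M^{n-1}\norm{a}$ to a representing sum for $\norm{\cdot}_{\mathcal{S}_n}$; and $r_\lambda=S_n(r_\lambda,e_\mu,\dotsc,e_\mu)+(\text{a smaller remainder})$, which one iterates. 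To make this rigorous without an infinite iteration, the neatest route is: since $\norm{a-a\ast e_\lambda^{\ast(n-1)}}_{\mathcal{S}_n}$ is what we are trying to bound and we already know $\norm{\cdot}_{\mathcal{S}_n}$ is a norm dominating $\norm{\cdot}$, we instead show directly that the map $a\mapsto a\ast e_\lambda^{\ast(n-1)}$ has $\norm{\cdot}_{\mathcal{S}_n}$-norm at most $M^{n-1}$ on $\mathcal{P}_n(A)$ and converges pointwise to the identity in $\norm{\cdot}$; then for $a\in\mathcal{P}_n(A)$, $\norm{a}_{\mathcal{S}_n}\le\liminf_\lambda\norm{a\ast e_\lambda^{\ast(n-1)}}_{\mathcal{S}_n}$ would follow if $\norm{\cdot}_{\mathcal{S}_n}$ were lower semicontinuous with respect to $\norm{\cdot}$ — which it need not be a priori. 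So the cleanest self-contained argument is the telescoping one: write
\[
a = a\ast e_{\lambda_1}^{\ast(n-1)} + \bigl(a-a\ast e_{\lambda_1}^{\ast(n-1)}\bigr)\ast e_{\lambda_2}^{\ast(n-1)} + \dotsb,
\]
choosing $\lambda_1,\lambda_2,\dotsc$ so that the $k$-th remainder has $\norm{\cdot}$ at most $\varepsilon 2^{-k}/M^{n-1}$; each summand is $S_n$ of $(n-1)$ copies of an $e_{\lambda_k}$ and one factor of norm $\le$ (previous remainder norm), so the sum of the $\norm{\cdot}_{\mathcal{S}_n}$-contributions is at most $M^{n-1}\norm{a}+\varepsilon$. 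Letting $\varepsilon\to0$ gives $\norm{a}_{\mathcal{S}_n}\le M^{n-1}\norm{a}$. Combined with $\norm{a}\le\norm{a}_{\mathcal{S}_n}$ from Proposition~\ref{p1827}, this yields the displayed chain of inequalities, and in particular every $a\in A$ lies in $\mathcal{P}_n(A)$, so $\mathcal{P}_n(A)=A$.

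The main obstacle I anticipate is precisely the passage from density to equality and the handling of the tail in the telescoping sum: one must be careful that the series $\sum_k \bigl(\text{$k$-th remainder}\bigr)\ast e_{\lambda_k}^{\ast(n-1)}$ converges in $A$ to $a$ (this uses completeness of $A$ and the geometric decay of the remainders, so the partial sums are $\norm{\cdot}$-Cauchy, and their $\norm{\cdot}$-limit is $a$ since the running remainders tend to $0$), and simultaneously that the corresponding sum of $\norm{a_{1,j}}\dotsb\norm{a_{n,j}}$ terms is finite and bounded by $M^{n-1}\norm{a}+\varepsilon$. A subtlety worth flagging: centrality of the approximate identity is what guarantees $S_n(a,e_\lambda,\dotsc,e_\lambda)=a\ast e_\lambda^{\ast(n-1)}$ exactly — without it one would only get an average of products with the $a$ in various positions, and the identity $\mathcal{P}_n(A)=A$ could fail. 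Everything else (that $\norm{\cdot}_{\mathcal{S}_n}$ is a norm, the left-hand inequality) is already available from Proposition~\ref{p1827}, so no further verification of norm axioms is needed.
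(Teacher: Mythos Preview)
Your telescoping argument has a genuine gap at exactly the point you flag but then mis-resolve. The space $\mathcal{P}_n(A)$ and the norm $\lVert\cdot\rVert_{\mathcal{S}_n}$ are defined via \emph{finite} representations $a=\sum_{j=1}^m S_n(a_{1,j},\dotsc,a_{n,j})$. Your telescope
\[
a=\sum_{k\ge 1} S_n\bigl(r_{k-1},e_{\lambda_k},\dotsc,e_{\lambda_k}\bigr)
\]
is an infinite series, and at every finite stage the leftover remainder $r_N$ is not displayed as an $S_n$-term. So you never obtain a finite representation of $a$, hence you have not shown $a\in\mathcal{P}_n(A)$, and the quantity $\lVert a\rVert_{\mathcal{S}_n}$ is not yet defined. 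Your attempted repair---``the approximants are $\lVert\cdot\rVert_{\mathcal{S}_n}$-Cauchy, hence (by $\lVert\cdot\rVert\le\lVert\cdot\rVert_{\mathcal{S}_n}$) their $\lVert\cdot\rVert$-limit $a$ lies in $\mathcal{P}_n(A)$''---does not follow: the inequality $\lVert\cdot\rVert\le\lVert\cdot\rVert_{\mathcal{S}_n}$ only tells you the sequence is $\lVert\cdot\rVert$-Cauchy and has a limit in $A$; concluding that this limit belongs to $\mathcal{P}_n(A)$ would require $(\mathcal{P}_n(A),\lVert\cdot\rVert_{\mathcal{S}_n})$ to be complete, which is precisely (equivalent to) what you are trying to prove. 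The argument is circular.

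The alternative you mention in passing and then set aside---Cohen's factorization---is in fact the right tool, and it is exactly what the paper does. View $A$ as a Banach left $\mathcal{Z}(A)$-module; the central approximate identity $(e_\lambda)$ is a bounded approximate identity of bound $M$ for this module action. Applying the factorization theorem $n-1$ times (to $a=a_0$, then to $a_1$, and so on) produces central elements $z_1,\dotsc,z_{n-1}\in\mathcal{Z}(A)$ with $\lVert z_k\rVert\le M$ and an element $a_{n-1}\in A$ with $\lVert a_{n-1}\rVert<\lVert a\rVert+\varepsilon$ such that
\[
a=z_1\dotsb z_{n-1}a_{n-1}=S_n(z_1,\dotsc,z_{n-1},a_{n-1}).
\]
This is a \emph{single-term} representation, so $a\in\mathcal{P}_n(A)$ and $\lVert a\rVert_{\mathcal{S}_n}\le M^{n-1}(\lVert a\rVert+\varepsilon)$ immediately. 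No infinite series, no completeness issue. Your instinct that ``the approximate-identity argument suffices'' without factorization is the error; factorization is what converts the approximate identity into a genuine finite (indeed, one-term) $S_n$-decomposition.
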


\begin{proof}
Take $a\in A$, and let $\varepsilon\in\mathbb{R}^+$.
Our objective is to show that $a\in \mathcal{P}_n(A)$ and that 
$\left\Vert a \right\Vert_{\mathcal{S}_n}<M^{n-1}\bigl(\left\Vert a 
\right\Vert +\varepsilon\bigr)$. Let $\mathcal{Z}(A)$ be the centre of $A$,
and let $(e_\lambda)_{\lambda\in\Lambda}$ be a central bounded approximate identity for $A$ of bound $M$.
Then $\mathcal{Z}(A)$ is a Banach algebra and  $(e_\lambda)_{\lambda\in\Lambda}$ is a bounded approximate identity for $\mathcal{Z}(A)$. Of course, $A$ is 
a Banach $\mathcal{Z}(A)$-bimodule and $\mathcal{Z}(A)A$ is dense in $A$.
We now take $a_0=a$, and then we successively apply the factorization 
theorem~\cite[Theorem~2.9.24]{D} to choose
$a_1,\dotsc,a_{n-1}\in A$ and $z_1,\dotsc,z_{n-1}\in\mathcal{Z}(A)$ such that
\begin{gather}
a_{k-1}=z_k a_k, \\
\left\Vert a_{k-1}-a_{k} \right\Vert < \frac{\varepsilon}{n},
\end{gather}
and 
\begin{equation}
\Vert z_k\Vert\le M
\end{equation}
for each $k\in\{1,\dotsc,n-1\}$. 
Then
\[
a=S_n \left(z_1,\dotsc,z_{n-1},a_{n-1} \right)
\]
and therefore $a\in\mathcal{P}_n(A)$ with
\begin{equation*}
\begin{split}
\left\Vert a \right\Vert_{\mathcal{S}_n} & \leq
\left\Vert z_1 \right\Vert \dotsb \left\Vert z_{n-1} \right\Vert \left\Vert a_{n-1} \right\Vert \\ 
& \leq M^{n-1} \left\Vert a_{n-1} \right\Vert \\
& \leq M^{n-1}\bigl(
\left\Vert a_{n-1}-a_{n-2} \right\Vert + \dotsb + \left\Vert a_1-a \right\Vert + \left\Vert a \right\Vert\bigr)\\
& < M^{n-1}\bigl( \left\Vert a \right\Vert+\varepsilon\bigr),
\end{split}
\end{equation*}
as claimed.
Further, on account of Theorem~\ref{g}, we have
\[
\left\Vert a \right\Vert \le \left\Vert a \right\Vert_{\mathcal{S}_n} \le M^{n-1}\bigl( \left\Vert a \right\Vert+\varepsilon\bigr)
\] 
for each $\varepsilon\in\mathbb{R}^+$,
which gives $\left\Vert a \right\Vert \le \left\Vert a \right\Vert_{\mathcal{S}_n}\le M^{n-1} \left\Vert a \right\Vert$.
\end{proof}

\begin{corollary}\label{p1}
Let $G$ be a compact group.
Then $\mathcal{P}_n\bigl(L^1(G)\bigr)=L^1(G)$ and 
$\left\Vert \cdot \right\Vert_{\mathcal{S}_n}= \left\Vert \cdot \right\Vert_1$.
\end{corollary}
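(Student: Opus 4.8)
The plan is to derive this as an immediate consequence of Proposition~\ref{p19}. The key observation is that $L^1(G)$, for $G$ a compact group, is a Banach algebra possessing a central bounded approximate identity of bound exactly $1$: indeed, the net $(h_\lambda)_{\lambda\in\Lambda}$ introduced at the beginning of Section~\ref{sect} (coming from \cite[Theorem~28.53]{HR}) satisfies $\lVert h_\lambda\rVert_1=1$ for all $\lambda$ and $h_\lambda\ast f=f\ast h_\lambda$ for all $f\in L^1(G)$, so it is a central bounded approximate identity of bound $M=1$.

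Applying Proposition~\ref{p19} with $A=L^1(G)$ and $M=1$ gives at once that $\mathcal{P}_n\bigl(L^1(G)\bigr)=L^1(G)$ and that $\lVert\cdot\rVert_1\le\lVert\cdot\rVert_{\mathcal{S}_n}\le M^{n-1}\lVert\cdot\rVert_1=\lVert\cdot\rVert_1$, whence $\lVert\cdot\rVert_{\mathcal{S}_n}=\lVert\cdot\rVert_1$ on $L^1(G)$. This is the entire argument; no further computation is needed.

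There is essentially no obstacle here: the only thing to be careful about is to record explicitly that the approximate identity of Section~\ref{sect} is both central and of bound $1$, so that the hypotheses of Proposition~\ref{p19} are met with the sharp constant $M=1$, which is what forces the two norms to coincide rather than merely be equivalent. I would phrase the proof in one or two sentences along exactly these lines.

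\begin{proof}
As noted at the beginning of Section~\ref{sect}, the net $(h_\lambda)_{\lambda\in\Lambda}$ furnished by \cite[Theorem~28.53]{HR} is a central bounded approximate identity for $L^1(G)$ with $\lVert h_\lambda\rVert_1=1$ for each $\lambda\in\Lambda$; hence $L^1(G)$ has a central bounded approximate identity of bound $1$. Proposition~\ref{p19}, applied with $A=L^1(G)$ and $M=1$, now gives $\mathcal{P}_n\bigl(L^1(G)\bigr)=L^1(G)$ and $\lVert\cdot\rVert_1\le\lVert\cdot\rVert_{\mathcal{S}_n}\le 1^{n-1}\lVert\cdot\rVert_1=\lVert\cdot\rVert_1$, so that $\lVert\cdot\rVert_{\mathcal{S}_n}=\lVert\cdot\rVert_1$.
\end{proof}
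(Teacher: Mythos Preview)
Your proof is correct and follows exactly the same approach as the paper's own proof: observe that the net $(h_\lambda)_{\lambda\in\Lambda}$ from the beginning of Section~\ref{sect} is a central bounded approximate identity for $L^1(G)$ of bound $1$, and then apply Proposition~\ref{p19} with $M=1$.
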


\begin{proof}
The net $(h_\lambda)_{\lambda\in\Lambda}$ introduced in the beginning of Section~\ref{sect}
is a central bounded approximate identity for $L^1(G)$ of bound $1$. Then
Proposition~\ref{p19} applies to show the result.
\end{proof}

\section{Some other examples of convolution algebras}

Sections 2 and 5 combine to answer Question Q3 in the positive 
for a variety of convolutions algebras associated with $G$.

\begin{lemma}\label{1319}
Let $G$ be a compact group, 
and let $A$ be a subalgebra of $L^1(G)$ which is equipped with a norm $\left\Vert \cdot \right\Vert_A$ of its own and
satisfies the following conditions:
\begin{enumerate}
\item[(a)]
$A$ is a Banach algebra
with respect to  $\left\Vert \cdot \right\Vert_A$; 
\item[(b)]
$\mathcal{T}(G)$ is a dense subspace of $A$
with respect to  $\left\Vert \cdot \right\Vert_A$;
\end{enumerate}
Then $\mathcal{T}(G)$ is a dense subset of $\mathcal{P}_n(A)$.
\end{lemma}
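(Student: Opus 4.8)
The plan is to identify $\mathcal{P}_n(A)$ as a concrete subset of $A$ and then show that $\mathcal{T}(G)$, which sits inside $\mathcal{P}_n(A)$, is already dense there for the norm $\norm{\cdot}_{\mathcal{P}_n}$. The starting observation is that $\mathcal{T}(G)\subseteq\mathcal{P}_n(A)$: indeed, by Lemma~\ref{l2}(2) every $f\in\mathcal{T}(G)$ is a \emph{finite} sum $f=\sum_{[\pi]}d_\pi f\ast\chi_\pi$, and by Lemma~\ref{l2}(1) each summand lies in the minimal ideal $\mathcal{T}_\pi(G)\cong\mathbb{M}_{d_\pi}$, where it is an $n$-th convolution power (every element of a full matrix algebra is an $n$-th power, since every matrix has an $n$-th root after Jordan/triangular normal form — alternatively invoke surjectivity of $M\mapsto M^n$ on $\mathbb{M}_k$). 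Hence $f\in\mathcal{P}_n\bigl(\mathcal{T}_\pi(G)\bigr)\subseteq\mathcal{P}_n(A)$, and since $\mathcal{P}_n(A)$ is a linear space, $f\in\mathcal{P}_n(A)$. So the inclusion $\mathcal{T}(G)\hookrightarrow\bigl(\mathcal{P}_n(A),\norm{\cdot}_{\mathcal{P}_n}\bigr)$ makes sense and it remains to prove it has dense range.

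To prove density, I would take an arbitrary $a\in\mathcal{P}_n(A)$, write $a=\sum_{j=1}^m a_j^n$ with $a_j\in A$, fix $\varepsilon>0$, and use condition (b) to pick $g_j\in\mathcal{T}(G)$ with $\norm{a_j-g_j}_A$ as small as desired; then $\sum_{j=1}^m g_j^n\in\mathcal{T}(G)$ (since $\mathcal{T}(G)$ is an algebra, being a two-sided ideal of $L^1(G)$, and is a subalgebra of $A$). It remains to estimate $\bigl\Vert a-\sum_j g_j^n\bigr\Vert_{\mathcal{P}_n}=\bigl\Vert\sum_j(a_j^n-g_j^n)\bigr\Vert_{\mathcal{P}_n}$. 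Using the telescoping identity $a_j^n-g_j^n=\sum_{i=0}^{n-1}a_j^{\,i}(a_j-g_j)g_j^{\,n-1-i}$ together with Proposition~\ref{p1827} — more precisely the bound $\norm{b}_{\mathcal{P}_n}\le\frac{n^n}{n!}\norm{b}_{\mathcal{S}_n}$ and the definition of $\norm{\cdot}_{\mathcal{S}_n}$ via the multilinear map $S_n$ — each term $a_j^{\,i}(a_j-g_j)g_j^{\,n-1-i}$ contributes at most $\frac{n^n}{n!}\norm{a_j}_A^{\,i}\norm{a_j-g_j}_A\norm{g_j}_A^{\,n-1-i}$ to $\norm{\cdot}_{\mathcal{P}_n}$ (note a single product $x_1\cdots x_n$ equals $S_n(x_1,\dots,x_n)$ only after symmetrization, but it certainly lies in $\mathcal{S}_n(A)=\mathcal{P}_n(A)$ with $\mathcal{S}_n$-norm $\le\norm{x_1}_A\cdots\norm{x_n}_A$, by the very definition of $\norm{\cdot}_{\mathcal{S}_n}$). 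Summing over $i$ and $j$, and noting $\norm{g_j}_A\le\norm{a_j}_A+\varepsilon$, the whole difference has $\norm{\cdot}_{\mathcal{P}_n}$ bounded by a constant depending only on $n$, $m$, and $\max_j\norm{a_j}_A$ times $\max_j\norm{a_j-g_j}_A$, which we can force below any prescribed threshold.

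The point I expect to need the most care is the bookkeeping in the last estimate: one must be sure that every product of the form $a_j^{\,i}(a_j-g_j)g_j^{\,n-1-i}$ (a product of exactly $n$ factors from $A$) is legitimately controlled by $\norm{\cdot}_{\mathcal{S}_n}$, hence by $\norm{\cdot}_{\mathcal{P}_n}$ up to the factor $\tfrac{n^n}{n!}$, and that the triangle inequality for $\norm{\cdot}_{\mathcal{P}_n}$ (established in Theorem~\ref{g}) is applied to a sum of $nm$ such terms. Once that is set up, choosing $\norm{a_j-g_j}_A<\delta$ with $\delta$ small relative to $\varepsilon$, $m$, $n$, and $\max_j\norm{a_j}_A$ yields $\bigl\Vert a-\sum_j g_j^n\bigr\Vert_{\mathcal{P}_n}<\varepsilon$, completing the proof that $\mathcal{T}(G)$ is dense in $\mathcal{P}_n(A)$. (Conditions (a) and (b) are used exactly here: (b) to approximate the $a_j$ by trigonometric polynomials in the $A$-norm, and (a) only implicitly, to know $\norm{\cdot}_A$ is submultiplicative so the telescoping estimate closes.)
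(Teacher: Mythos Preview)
Your overall architecture is right, and in fact coincides with the paper's: show $\mathcal{T}(G)\subset\mathcal{P}_n(A)$, then approximate each $a_j$ by some $g_j\in\mathcal{T}(G)$ and control $\norm{a_j^{*n}-g_j^{*n}}_{\mathcal{P}_n}$. But two of the supporting claims are false as stated.

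First, the argument for $\mathcal{T}(G)\subset\mathcal{P}_n(A)$ is incorrect: not every matrix in $\mathbb{M}_k$ is an $n$-th power (the nilpotent Jordan block $\bigl(\begin{smallmatrix}0&1\\0&0\end{smallmatrix}\bigr)$ has no square root), so you cannot say each $d_\pi f\ast\chi_\pi$ is itself an $n$-th convolution power. This is easy to repair, and the paper does it cleanly: since $d_\pi\chi_\pi$ is an identity for $\mathcal{T}_\pi(G)$, one has $f=S_n(f,d_\pi\chi_\pi,\dotsc,d_\pi\chi_\pi)$, and the polarization formula rewrites this as a finite sum of $n$-th powers.

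Second, and more seriously, the density estimate does not close. You assert that a single (unsymmetrized) product $x_1\ast\dotsb\ast x_n$ lies in $\mathcal{S}_n(A)=\mathcal{P}_n(A)$ with $\norm{\cdot}_{\mathcal{S}_n}\le\norm{x_1}_A\dotsb\norm{x_n}_A$ ``by the very definition''. But the definition of $\norm{\cdot}_{\mathcal{S}_n}$ runs over representations as sums of \emph{symmetrized} products $S_n(\cdot,\dotsc,\cdot)$, and an arbitrary product need not lie in $\mathcal{P}_n(A)$ at all: already for $n=2$ in a noncommutative algebra, squares always produce $xy$ and $yx$ with equal coefficients, so $xy\notin\mathcal{P}_2$ in general. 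Since $G$ is only assumed compact, $A$ is typically noncommutative, and there is no reason the individual telescoping terms $a_j^{\,i}(a_j-g_j)g_j^{\,n-1-i}$ should belong to $\mathcal{P}_n(A)$, let alone with the claimed norm bound; so the triangle inequality for $\norm{\cdot}_{\mathcal{P}_n}$ cannot be applied termwise. The remedy is to replace the telescoping identity by the symmetric binomial expansion
\[
a_j^{*n}-g_j^{*n}=\sum_{k=1}^{n}\binom{n}{k}S_n\bigl(\underbrace{g_j,\dotsc,g_j}_{n-k},\underbrace{a_j-g_j,\dotsc,a_j-g_j}_{k}\bigr),
\]
which is exactly what the paper uses: each summand here \emph{is} of the form $S_n(\dotsc)$, hence in $\mathcal{P}_n(A)$ with $\norm{\cdot}_{\mathcal{S}_n}\le\norm{g_j}_A^{\,n-k}\norm{a_j-g_j}_A^{\,k}$, and Proposition~\ref{p1827} then gives the required $\norm{\cdot}_{\mathcal{P}_n}$ bound. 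With these two fixes your argument becomes the paper's.
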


\begin{proof}
For $\left[\pi \right]\in\widehat{G}$ and $f\in\mathcal{T}_\pi(G)$, the polarization formula and Lemma~\ref{l1}(1) yield
\begin{equation*}
\begin{split}
f & = S_n \left(f,d_\pi\chi_\pi,\dotsc,d_\pi\chi_\pi \right)\\
& =
\frac{1}{n!\,2^n}\sum_{\epsilon_1,\ldots,\epsilon_n=\pm 1}
\epsilon_{1}\cdots\epsilon_{n} \left(\epsilon_1f+\epsilon_2 d_\pi\chi_\pi+\dotsb +\epsilon_{n}d_\pi\chi_\pi \right)^{*n}.
\end{split}
\end{equation*}
This shows that
$\mathcal{T}_\pi(G)$ is equal to the linear span of the set $\bigl\{f^{*n} :  f\in\mathcal{T}_\pi(G)\bigr\}$,
which implies that $\mathcal{T}(G)\subset\mathcal{P}_n(A)$.

On the other hand, if $f,g\in A$, then
\begin{equation*}
\begin{split}
f^{*n}
 & =
P_n\bigl(g+(f-g)\bigr)\\
& =
\sum_{j=0}^{n}\binom{n}{j} S_n (\underbrace{g,\dotsc,g}_{n-j},\underbrace{f-g,\dotsc,f-g}_{j})\\
& =
g^{*n}+\sum_{j=1}^{n}\binom{n}{j} S_n(\underbrace{g,\dotsc,g}_{n-j},\underbrace{f-g,\dotsc,f-g}_{j})
\end{split}
\end{equation*}
and therefore
\begin{equation*}
\left\Vert f^{*n}-g^{*n} \right\Vert_{\mathcal{S}_n}\le
\sum_{j=1}^{n}\binom{n}{j} \left\Vert g \right\Vert_{A}^{n-j} \left\Vert f-g \right\Vert_{A}^{j}.
\end{equation*}
From Proposition~\ref{p1827}, we deduce that
\begin{equation}\label{1452}
\left\Vert f^{*n}-g^{*n} \right\Vert_{\mathcal{P}_n}\le
\frac{n^n}{n!}
\sum_{j=1}^{n}\binom{n}{j} \left\Vert g \right\Vert_{A}^{n-j} \left\Vert f-g \right\Vert_{A}^{j}.
\end{equation}
Let $f\in A$. Then there exists a sequence $(f_k)$ in $\mathcal{T}(G)$ such that $\left\Vert f-f_k \right\Vert_A\to 0$,
and~\eqref{1452} then gives $\left\Vert f^{*n}-f_k^{*n} \right\Vert_{\mathcal{P}_n}\to 0$.
This implies that $\mathcal{T}(G)$ is dense in $\mathcal{P}_n(A)$.
\end{proof}

\begin{theorem}\label{tf}
Let $G$ be a compact group,
and let $A$ be a subalgebra of $L^1(G)$ which is equipped with a norm $\left\Vert \cdot \right\Vert_A$ of its own and
satisfies the following conditions:
\begin{enumerate}
\item[(a)]
$A$ is a Banach algebra
with respect to $\left\Vert \cdot \right\Vert_A$; 
\item[(b)]
$\mathcal{T}(G)$ is a dense subspace of $A$
with respect to $\left\Vert \cdot \right\Vert_A$;
\item[(c)]
$A$ is a Banach left $L^1(G)$-module with respect to $\left\Vert \cdot \right\Vert_A$ and the convolution multiplication.
\end{enumerate}
Let $X$ be a Banach space, and
let $P\colon A\to X$ be a continuous $n$-homogeneous polynomial.
Then the following conditions are equivalent:
\begin{enumerate}%[(1)]
\item
the polynomial $P$ is orthogonally additive;
\item
the polynomial $P$ is orthogonally additive on $\mathcal{T}(G)$, i.e.,
$P(f+g)=P(f)+P(g)$ whenever $f,g\in\mathcal{T}(G)$ are such that $f\ast g=g\ast f=0$;
\item
there exists a unique continuous linear map $\Phi\colon\mathcal{P}_n(A)\to X$ such that
$P(f)=\Phi\left( f^{*n} \right)$ for each $f\in A$.
\end{enumerate}
\end{theorem}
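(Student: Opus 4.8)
The plan is to mirror the structure of the proof of Theorem~\ref{tm}, replacing $L^1(G)$ by $A$ and the target norm $\norm{\cdot}_1$ by $\norm{\cdot}_{\mathcal{P}_n}$, while using hypothesis~(c) to carry out the approximate-identity argument inside $A$. The implications $(1)\Rightarrow(2)$ and $(3)\Rightarrow(1)$ are immediate: the first because $\mathcal{T}(G)\subset A$, and the second because whenever $f\ast g=g\ast f=0$ in $A$ one has $(f+g)^{*n}=f^{*n}+g^{*n}$ (all mixed terms contain a factor $f\ast g$ or $g\ast f$), so $P(f+g)=\Phi(f^{*n}+g^{*n})=P(f)+P(g)$. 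The substance is $(2)\Rightarrow(3)$.

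First I would invoke Theorem~\ref{t1} applied to the restriction of $P$ to the algebra $\mathcal{T}(G)$: since $P$ is orthogonally additive on $\mathcal{T}(G)$, there is a linear map $\Phi_0\colon\mathcal{T}(G)\to X$ with $P(f)=\Phi_0(f^{*n})$ for $f\in\mathcal{T}(G)$, and $\Phi_0$ has the explicit form recorded in Theorem~\ref{t1} in terms of the symmetric $n$-linear map $\varphi$ associated with $P$. Note Lemma~\ref{1319} already tells us $\mathcal{T}(G)\subset\mathcal{P}_n(A)$ and is dense there, so the real task is to show $\Phi_0$ is $\norm{\cdot}_{\mathcal{P}_n}$-continuous on $\mathcal{T}(G)$; it then extends uniquely to a continuous linear map $\Phi\colon\mathcal{P}_n(A)\to X$. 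Because $\norm{\cdot}\le\norm{\cdot}_{\mathcal{P}_n}$ on $\mathcal{P}_n(A)$ and $P$, $\Phi$ are both continuous, the identity $P(f)=\Phi(f^{*n})$ propagates from $\mathcal{T}(G)$ to all of $A$ (using that $f\mapsto f^{*n}$ is $\norm{\cdot}_A$-to-$\norm{\cdot}_{\mathcal{P}_n}$ continuous on $A$, which is exactly inequality~\eqref{1452} from the proof of Lemma~\ref{1319}). Uniqueness follows since $\mathcal{T}(G)$ is $\norm{\cdot}_{\mathcal{P}_n}$-dense in $\mathcal{P}_n(A)$ and $\Phi$ is determined on $\mathcal{T}(G)$ by Theorem~\ref{t1}.

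The main obstacle, and the one new ingredient relative to Theorem~\ref{tm}, is proving $\norm{\Phi_0(f)}\le C\norm{f}_{\mathcal{P}_n}$ for $f\in\mathcal{T}(G)$. Here I would use the central bounded approximate identity $(h_\lambda)$ from Section~\ref{sect}, consisting of trigonometric polynomials. For $f\in\mathcal{T}(G)$ the computation $h_\lambda\ast\chi_\pi=\alpha_\lambda(\pi)\chi_\pi$ from the proof of Theorem~\ref{tm} gives, via the polarization formula and Theorem~\ref{t1}, that $\varphi(f,h_\lambda,\dotsc,h_\lambda)=\sum_{[\pi]\in\mathcal F}\alpha_\lambda(\pi)^{n-1}\varphi(d_\pi f\ast\chi_\pi,d_\pi\chi_\pi,\dotsc,d_\pi\chi_\pi)$, where $\mathcal F$ is the finite set of $[\pi]$ with $f\ast\chi_\pi\ne0$; letting $\lambda$ run shows $\lim_\lambda\varphi(f,h_\lambda,\dotsc,h_\lambda)=\Phi_0(f)$. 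The point where hypothesis~(c) enters is the bound: by continuity of the module action, $\norm{f\ast h_\lambda}_A\le K\norm{h_\lambda}_1\norm{f}_A$ for some constant $K$, but more usefully I would estimate directly in $\mathcal{P}_n(A)$. Given $f=\sum_{j=1}^m g_j^{*n}$ with $g_j\in A$ (any representation witnessing $\norm{f}_{\mathcal{P}_n}$), one has $\varphi(f,h_\lambda,\dotsc,h_\lambda)=\sum_j\varphi(g_j^{*n},h_\lambda,\dotsc,h_\lambda)=\sum_j P(g_j\ast h_\lambda)$ (using that $\varphi(a^{*n},b,\dotsc,b)$ equals $P$ evaluated on a suitable product when $b$ is a central idempotent-like element — more carefully, expand via polarization and use centrality of $h_\lambda$), and hence $\norm{\varphi(f,h_\lambda,\dotsc,h_\lambda)}\le\sum_j\norm{P}\,\norm{g_j\ast h_\lambda\ast\cdots}\le\norm{P}\sum_j\norm{g_j}_A^n$ once the module bound is normalized so that $\norm{g\ast h_\lambda}_A\le\norm{g}_A$ (the approximate identity for $L^1(G)$ has bound $1$, and one checks the module action can be taken contractive, or absorbs a fixed constant into $C$). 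Passing to the limit in $\lambda$ and then taking the infimum over representations of $f$ yields $\norm{\Phi_0(f)}\le\norm{P}\,\norm{f}_{\mathcal{P}_n}$, which is the required continuity. The delicate bookkeeping is making the estimate on $\varphi(g_j^{*n},h_\lambda,\dotsc,h_\lambda)$ precise — rewriting it as a polynomial expression $P(\text{something})$ with controlled norm — and ensuring the module action is exploited with the right constant; everything else is a transcription of the $L^1(G)$ argument with $\norm{\cdot}_{\mathcal{P}_n}$ in place of $\norm{\cdot}_1$.
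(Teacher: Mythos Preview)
Your overall architecture matches the paper: produce $\Phi_0$ on $\mathcal{T}(G)$ via Theorem~\ref{t1}, prove it is $\norm{\cdot}_{\mathcal{P}_n}$-continuous, extend by Lemma~\ref{1319}, and propagate the identity $P=\Phi\circ P_n$ to $A$ using~\eqref{1452}. The limit computation $\lim_\lambda\varphi(f,h_\lambda,\dotsc,h_\lambda)=\Phi_0(f)$ for $f\in\mathcal{T}(G)$ is also correct. The gap is in the bound, at exactly the step you flag as ``delicate bookkeeping''. The displayed identity $\varphi(g_j^{*n},h_\lambda,\dotsc,h_\lambda)=P(g_j\ast h_\lambda)$ is false: on $\mathcal{T}(G)$ one has $\varphi(a,h_\lambda,\dotsc,h_\lambda)=\Phi_0\bigl(a\ast h_\lambda^{*(n-1)}\bigr)$ by centrality, so with $a=g_j^{*n}$ this is $\Phi_0\bigl(g_j^{*n}\ast h_\lambda^{*(n-1)}\bigr)$, whereas $P(g_j\ast h_\lambda)=\Phi_0\bigl(g_j^{*n}\ast h_\lambda^{*n}\bigr)$; these differ by one convolution with $h_\lambda$. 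Worse, for $g_j\in A\setminus\mathcal{T}(G)$ the relation $\varphi=\Phi_0\circ S_n$ is not yet available at all, and the crude estimate $\norm{\varphi(f,h_\lambda,\dotsc,h_\lambda)}\le\norm{\varphi}\,\norm{h_\lambda}_A^{n-1}\norm{f}_A$ is useless because $(\norm{h_\lambda}_A)$ is typically unbounded. This is precisely why the $L^1(G)$ argument does not transcribe verbatim.

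The paper's remedy is to replace $\varphi(f,h_\lambda,\dotsc,h_\lambda)$ by $\Phi_0(h_\lambda^{*n}\ast f)$. Centrality gives $h_\lambda^{*n}\ast f=\sum_j(h_\lambda\ast g_j)^{*n}$, and since each $h_\lambda\ast g_j\in\mathcal{T}(G)$ (ideal property), one gets the \emph{exact} identity $\Phi_0(h_\lambda^{*n}\ast f)=\sum_jP(h_\lambda\ast g_j)$, whence $\norm{\Phi_0(h_\lambda^{*n}\ast f)}\le\norm{P}\sum_j\norm{h_\lambda\ast g_j}_A^n\le\norm{P}\sum_j\norm{g_j}_A^n$ using hypothesis~(c) and $\norm{h_\lambda}_1=1$. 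Taking the infimum yields $\norm{\Phi_0(h_\lambda^{*n}\ast f)}\le\norm{P}\,\norm{f}_{\mathcal{P}_n}$. The price is that the limit step now needs an extra idea you are missing: one cannot invoke continuity of $\Phi_0$ (that is what is being proved) to pass from $h_\lambda^{*n}\ast f\to f$ in $\norm{\cdot}_A$ to $\Phi_0(h_\lambda^{*n}\ast f)\to\Phi_0(f)$. Instead the paper observes that $f$ and all $h_\lambda^{*n}\ast f$ lie in a fixed finite-dimensional ideal $\mathcal{M}=\mathcal{T}_{\pi_1}(G)+\dotsb+\mathcal{T}_{\pi_l}(G)$, on which $\Phi_0$ is automatically continuous, so the bound survives the limit.
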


\begin{proof}
It is clear that $(1)\Rightarrow(2)$ and that $(3)\Rightarrow(1)$.
We will prove that $(2)\Rightarrow(3)$.

Let $\varphi\colon A^n\to X$ be the symmetric $n$-linear map associated with $P$,
and let $\Phi_0\colon\mathcal{T}(G)\to X$ be the linear map defined by  
\begin{equation*}
\Phi_0(f)=
\sum_{[\pi]\in\widehat{G}}\varphi \left( d_\pi f\ast\chi_\pi,d_\pi\chi_\pi\dotsc,d_\pi \chi_\pi \right)
\end{equation*}
for each $f\in\mathcal{T}(G)$.
Since $P$ is orthogonally additive on $\mathcal{T}(G)$, Theorem~\ref{t1} yields
\begin{equation}\label{e1056}
P(f)=\Phi_0 \left(f^{*n} \right) \quad \left( f\in\mathcal{T}(G) \right).
\end{equation}
We claim that $\Phi_0$ is continuous.
Let $(h_\lambda)_{\lambda\in\Lambda}$ be as introduced in the beginning of Section~\ref{sect}.
Set $f\in\mathcal{T}(G)$, and 
assume that $f=\sum_{j=1}^m f_j^{*n}$ with $f_1,\dotsc ,f_m\in A$.
For $\lambda\in\Lambda$, $h_\lambda$ belongs to the centre of $L^1(G)$, and so
\[
h_\lambda^{*n}\ast f=\sum_{j=1}^m \bigl( h_\lambda\ast f_j \bigr)^{*n}.
\]
Since $f_j\ast h_\lambda\in\mathcal{T}(G)$ $(j\in\{1,\dotsc,m\}, \ \lambda\in\Lambda)$, \eqref{e1056} yields
\[
\Phi_0 \left( h_\lambda^{*n}\ast f \right)=
\sum_{j=1}^m\Phi_0\bigl( \left(h_\lambda\ast f_j \right)^{*n}\bigr)=
\sum_{j=1}^m P \left( h_\lambda\ast f_j \right),
\]
whence
\begin{equation*}
\begin{split}
\left\Vert \Phi_0 \left( h_\lambda^{*n}\ast f \right) \right\Vert & =
\sum_{j=1}^m \left\Vert P(h_\lambda\ast f_j)\right\Vert  \leq 
\sum_{j=1}^m \left\Vert P\right\Vert \left\Vert h_\lambda\ast f_j \right\Vert_A^n \\ 
&\leq
\sum_{j=1}^m \Vert P\Vert \left\Vert h_\lambda\right\Vert_1^n \left\Vert f_j\right\Vert_A^n  \leq
\left\Vert P \right\Vert\sum_{j=1}^m \left\Vert f_j\right\Vert_A^n.    
\end{split}
\end{equation*}
We thus get
\begin{equation}\label{1126}
\left\Vert \Phi_0 \left( h_\lambda^{*n}\ast f \right) \right\Vert \le 
\left\Vert P \right\Vert \left\Vert f \right\Vert_{\mathcal{P}_n}.
\end{equation}
We now see that, for each $\lambda\in\Lambda$,
\begin{equation*}
\begin{split}
\left\Vert h_\lambda^{*n}\ast f-f\right\Vert_A & \le
\left\Vert h_\lambda^{*n}\ast f-h_\lambda^{*n-1}\ast f\right\Vert_A\\
& \quad {}+ \dotsb +
\left\Vert h_\lambda^{*2}\ast f-h_\lambda\ast f \right\Vert_A+ \left\Vert h_\lambda\ast f-f\right\Vert_A
\\ 
& \le
\bigl(\left\Vert h_\lambda\right\Vert_1^{n-1}+\dots+ \left\Vert h_\lambda\right\Vert_1+1\bigr) \left\Vert h_\lambda\ast f-f\right\Vert_A
\\ 
& \le
n\left\Vert h_\lambda\ast f-f\right\Vert_A.
\end{split}
\end{equation*}

On account of \cite[Remarks~32.33(a) and 38.6(b)]{HR}, we have
\[
\lim_{\lambda\in\Lambda} \left\Vert h_\lambda\ast f-f \right\Vert_A=0,
\]
and so
\[
\lim_{\lambda\in\Lambda} \left\Vert h_\lambda^{*n}\ast f-f \right\Vert_A=0.
\]
Since $f\in\mathcal{T}(G)$, it follows that there exist $[\pi_1],\dotsc,[\pi_l]\in\widehat{G}$
such that $f\in\mathcal{M}:=\mathcal{T}_{\pi_1}(G)+\dotsb+\mathcal{T}_{\pi_l}(G)$.
The finite-dimensionality of $\mathcal{M}$ implies that the restriction of $\Phi_0$ to $\mathcal{M}$ is
continuous. Further, Lemma~\ref{l2}(1) shows that $\mathcal{M}$ is a two-sided ideal of $L^1(G)$,
and so $h_\lambda\in\mathcal{M}\ast f$ for each $\lambda\in\Lambda$.
Therefore, taking limits on both sides of equation \eqref{1126}
(and using the continuity of $\Phi_0$ on $\mathcal{M}$), we see that
\[
\left\Vert \Phi_0(f) \right\Vert \le \left\Vert P \right\Vert \left\Vert f \right\Vert_{\mathcal{P}_n},
\]
which proves our claim.

Since $\mathcal{T}(G)$ is dense in $\mathcal{P}_n(A)$ (Lemma~\ref{1319}),
it follows that $\Phi_0$ has a continuous extension $\Phi\colon\mathcal{P}_n(A)\to X$.
Take $f\in A$. 
There exists a sequence $(f_k)$ in $\mathcal{T}(G)$ with  $\left\Vert f-f_k \right\Vert_A\to 0$,
so that $P \left(f_k \right)\to P(f)$.
Further, \eqref{1452} gives $\left\Vert f^{*n}-f_k^{*n} \right\Vert_{\mathcal{P}_n}\rightarrow 0$, and consequently
$P \left(f_k \right) =\Phi \left(f_k^{*n} \right)\to\Phi \left( f^{*n} \right)$.
Hence $\Phi \left(f^{*n} \right)=P(f)$.

Finally, we proceed to prove the uniqueness of the map $\Phi$.
Suppose that $\Psi\colon\mathcal{P}_n(A)\to X$ is a continuous linear map
such that $P(f)=\Psi \left(f^{*n} \right)$ for each $f\in A$.
By Theorem~\ref{t1}, $\Psi(f)=\Phi(f)\bigl(=\Phi_0(f)\bigr)$
for each $f\in\mathcal{T}(G)$.
Since $\mathcal{T}(G)$ is dense in $L^1(G)$ (Lemma~\ref{1319}), and both
$\Phi$ and $\Psi$ are continuous, it follows
that $\Psi(f)=\Phi(f)$ for each $f\in A$.
\end{proof}

\begin{example}\label{ex}
Let $G$ be a compact group.
The following convolution algebras satisfy the conditions required in Theorem~\ref{tf}
(see \cite[Remark~38.6]{HR}).
\begin{enumerate}%[(1)]
\item
For $1\le p<\infty$, the algebra $L^p(G)$.
\item
The algebra $C(G)$.
\item
The algebra $A(G)$ consisting of those functions $f\in C(G)$  of the form
\[
f=g\ast h
\]
with $g,h\in L^2(G)$. The norm $\left\Vert \cdot \right\Vert_{A(G)}$ on $A(G)$ is defined by
\[
\left\Vert f \right\Vert_{A(G)}=\inf
\bigl\{
\left\Vert g \right\Vert_2 \left\Vert h \right\Vert_2 : f=g\ast h, \ g,h\in L^2(G)
\bigr\}
\]
for each $f\in A(G)$.
It is worth noting that a function $f\in L^1(G)$ is equal almost everywhere to a function in $A(G)$ if and only if
\[
\sum_{[\pi]\in\widehat{G}}d_\pi\bigl\Vert\widehat{f}(\pi)\bigr\Vert_1 \ < \infty.
\]
Further,
\[
\left\Vert f \right\Vert_{A(G)} = \sum_{[\pi]\in\widehat{G}}d_\pi\bigl\Vert\widehat{f}(\pi)\bigr\Vert_1
\]
for each $f\in A(G)$.
Here $\norm{T}_1$ denotes the trace class norm of the operator $T\in\mathcal{B}(H_\pi)$.
\item
For $1<p<\infty$, the algebra $A_p(G)$ consisting of those functions $f\in C(G)$ of the form
\[
f=\sum_{k=1}^\infty g_k\ast h_k
\]
where $(g_k)$ is a sequence in $L^p(G)$, $(h_k)$ is a sequence in $L^q(G)$  
with $\frac{1}{p}+\frac{1}{q}=1$, and
\[
\sum_{k=1}^\infty \left\Vert g_k \right\Vert_p \left\Vert h_k \right\Vert_q  < \infty.
\]
The norm $\Vert\cdot\Vert_{A_p(G)}$ on $A_p(G)$ is defined by
\[
\left\Vert f \right\Vert_{A_p(G)}=
\inf\biggl\{
\sum_{k=1}^\infty \left\Vert g_k \right\Vert_p \left\Vert h_k \right\Vert_q : 
f=\sum_{k=1}^\infty g_k\ast h_k
\biggr\}
\]
for each $f\in A_p(G)$.  
\item
For $1< p<\infty$, the algebra $S_p(G)$ 
consisting of the functions $f\in L^1(G)$ for which
\[
\sum_{[\pi]\in\widehat{G}} d_\pi\bigl\Vert \widehat{f}(\pi)\bigr\Vert_{S^p(H_\pi)}^p  <  \infty.
\]
Here $\left\Vert T \right\Vert_{S^p(H_\pi)}$ denotes the $p$th Schatten norm of the operator $T\in\mathcal{B}(H_\pi)$. The norm $\left\Vert \cdot \right\Vert_{S_p(G)}$ on $S_p(G)$ is defined by
\[
\left\Vert f \right\Vert_{S_p(G)}=
\left\Vert f \right\Vert_1 +
\left(
\sum_{[\pi]\in\widehat{G}} d_\pi\bigl\Vert \widehat{f}(\pi)\bigr\Vert_{S^p(H_\pi)}^p
\right)^{1/p}
\]
for each $f\in S_p(G)$. 
%See \cite[(38.28)]{HR}.
\end{enumerate}
\end{example}

\begin{remark}
Together with Corollary~\ref{p1}, Theorem~\ref{tf} gives Theorem~\ref{tm}.
\end{remark}

\begin{remark}
It is well-known that the Banach spaces $A(G)$ and $A_p(G)$ of Example~\ref{ex} are particularly important Banach function algebras with respect to the pointwise multiplication. We emphasize that while the references \cite{A1,A2,V,W,WW} apply to the problem of representing the orthogonally additive homogeneous polynomials on the Banach function algebras $A(G)$ and $A_p(G)$ with respect to pointwise multiplication, Theorem~\ref{tf} gives us information about that problem in the case where both $A(G)$ and $A_p(G)$ are regarded as noncommutative Banach algebras with respect to convolution. 
\end{remark}

\begin{remark}
We do not know whether or not the conclusion of Theorem~\ref{tf} must hold for $A=L^\infty(G)$.
\end{remark}

\section*{\refname}

\end{document}